\newtheorem{definicion}{Definition}[section]
\newtheorem{lema}[definicion]{Lemma}
\newtheorem{proposicion}[definicion]{Proposition}
\newtheorem{teorema}[definicion]{Theorem}
\newtheorem{corolario}[definicion]{Corollary}
\newtheorem{nota}[definicion]{Remark}
\newcommand{\nat}{\mathbb{N}}
\newcommand{\lphi}{{L^{\varphi}(\Omega)}}
\newcommand{\lphiconj}{{L^{\varphi^*}(\Omega)}}
\newcommand{\lpsi}{{L^{\psi}(\Omega)}}
\begin{document}

\setlength{\baselineskip}{16pt}

\title[Weak compactness in nice Musielak-Orlicz spaces]{Weak compactness in nice Musielak-Orlicz spaces}
\author[M. Sanchiz]{Mauro Sanchiz$^{1}$}
\address{Department of Mathematical Analysis and Applied Mathematics, Faculty of Mathematics, Complutense University of Madrid, 28040 Madrid, Spain.}
\email[Mauro Sanchiz]{msanchiz@ucm.es}
\thanks{$^1$Supported by NAWA under the Ulam postoctoral program 2024/1/00064 and partially supported by the research project 2025/00145/001 ``Operadores, retículos y estructura de espacios de Banach''}
\subjclass[2020]{46E30}
\date{}

\begin{abstract}

We prove two weak compactness criteria in Musielak-Orlicz spaces for $N$-functions satisfying the $\Delta_2$-condition. They extend criteria from And\^o for Orlicz spaces to this setting of non-symmetrical Banach function spaces. As consequences, we prove criteria for a sequence in a Musielak-Orlicz space to be weakly convergent, and show that Musielak-Orlicz spaces with the subsequence splitting property are weakly Banach-Saks. The study includes the case of Musielak-Orlicz sequence spaces.

\end{abstract}
\maketitle

\vspace{-10mm}

\section{Introduction}
Compactness has been a central subject of study in functional analysis since the beggining of the field. From the Ascoli-Arzela's theorem for continuous functions to Kolmogorov's for $L_p$ spaces. As compactness is such a strict and difficult property to attain, weaker forms of compactness (and convergence) have been equally studied and its results celebrated in all kind of Banach spaces, such as Dunford-Pettis's Theorem for $L_1$ or Kakutani's Theorem solving the problem for reflexive spaces. Many other works have approached that subject. Recall, for example, the pioneer work of W. Luxemburg \cite{Luxemburg} and T. And\^o \cite{Ando}, or the later examples of Nowak (\cite{Nowak}), Alexopoulos (\cite{Alexopoulos}), Dodds et al. \cite{Dodds} and more recently Le\'{s}nik et al. (\cite{Maligranda}).

In 1962, Andô proved in \cite{Ando} two particularly interesting criteria for the weak compactness of a subset $S$ of an Orlicz space $L^{\varphi}$ for a nice enough Young function $\varphi$ (being a $\Delta_2$ $N$-function) to be weakly compact. First, that $S$ is weakly compact if and only if
$$
\lim_{\lambda\rightarrow 0}\sup_{f\in S} \frac{1}{\lambda}\int_{\Omega} \varphi(\vert f(x)\vert) dx=0,
$$
and second, that $S$ is weakly compact if it is bounded in another $\lpsi$ space with a more restrictive Young function $\psi$ (or the same $\varphi$ if $\lphi$ is reflexive), namely that $\psi$ increases more rapidly than $\varphi$.

These useful criteria stated by And\^o for finite measure spaces $\Omega$ have been extended and improved since then. In \cite{Nowak}, Nowak proved the same results for $\lphi$ spaces for $\sigma$-finite measure spaces $(\Omega,\mu)$. And more recently, similar results have been proved in the context of variable exponent Lebesgue spaces and variable exponent Lebesgue-Boschner sequence spaces in \cite{PCM}, \cite{PCM2}, \cite{PCM3} and \cite{ShiShiWang}.

The Orlicz spaces and the variable exponent Lebesgue spaces are generalizations of the classical Lebesgue spaces $L_p$ which are, in some sense, complementary. The Orlicz spaces $\lphi$ generalize the $L_p$ spaces by considering any Young function $\varphi$ to compute the integral $\int_\Omega \varphi(\vert f(x)\vert)$, not only the exponent $\varphi(t)=t^p$. Meanwhile, the variable exponent Lebesgue spaces $L^{p(\cdot)}(\Omega)$ only take exponent functions, but that can change at each point $x\in\Omega$, computing the integral $\int_\Omega \vert f(x)\vert^{p(x)} dx$, leading to non-symmetrical Banach function spaces. Both classes of spaces are themselves particular cases of the further genralization of Musielak-Orlicz spaces, where the Young function can also change at each point of the space, $\varphi(x,t)$. It was surprising that the Andô criteria for Orlicz spaces also work for variable exponent Lebesgue spaces. But, given that, it is not surprising that those criteria could also be extended to general Musielak-Orlicz spaces.

In this article, we generalize this study of weak compactness and extend Andô criteria to Musielak-Orlicz spaces $\lphi$, expanding the results from \cite{Nowak} and \cite{PCM3}. The two main results are that, under some restrictions on the function $\varphi$, a subset $S$ of $\lphi$ is relatively weakly compact (meaning that its closure $\overline{S}$ is weakly compact) if and only if either (Theorem \ref{crit2})
$$
\lim_{\lambda\rightarrow 0}\sup_{f\in S} \frac{1}{\lambda}\int_{\Omega} \varphi(x,\vert f(x)\vert) dx=0,
$$
or that (Theorem \ref{Th.M-O}) $S$ is norm bounded in a Musielak-Orlicz space $\lpsi$ such that $\psi$ increases uniformly more rapidly than $\varphi$. This is proved in Section \ref{sec.debilcompacidad} after the preliminaries, along with the consequences of criteria for Musielak-Orlicz sequence spaces and that $\lphi$ spaces are weakly Banach-Saks whenever $\lphi$ has the subsequence splitting property besides of the hypothesis of the weak compactness criteria. In Section \ref{sec.wconvergencia}, the weak compactness criteria is applied to sequences to get a variety of conditions on which a sequence $(f_n)$ in $\lphi$ is weakly convergent to some $f\in\lphi$, and the same for Musielak-Orlicz sequence spaces.

\vspace{-5mm}

\section{Preliminaries}\label{sec.preliminaresç}

Throughout the paper $(\Omega,\mu)$ is a $\sigma$-finite separable measurable space and
$L_{0} (\Omega)$ is the space of all real measurable function
classes on $(\Omega,\mu)$. We will use the notation in the monographic \cite{DHHR}, since we take from it almost every property we need of Musielak-Orlicz spaces (all of them well known).
A {\it $\Phi$-function} (or Young function) is a convex and left-continuous function $\varphi:[0,\infty)\rightarrow [0,\infty]$ with $\varphi(0)=0$, $\lim_{t\rightarrow 0^+}\varphi(t)=0$ and $\lim_{t\rightarrow \infty}\varphi(t)=\infty$. A {\it generalized $\Phi$-function} is a function $\varphi:\Omega\times[0,\infty)\rightarrow[0,\infty]$ such that $\varphi(y,\cdot)$ is a $\Phi$-function for every $y\in\Omega$ and $\varphi(\cdot,t)$ is measurable for every $t\geq 0$. Given a generalized $\Phi$-function $\varphi$, the {\it Musielak-Orlicz space (or generalized Orlicz space)} $\lphi$ consists of all measurable scalar function classes $f \in L_0(\Omega)$ such that the modular $\rho_{\varphi}(f/r)$ is finite for some $r>0$, where
$$
\rho_{\varphi}(f):= \int _{\Omega} \varphi(x,\vert f(x)\vert) d\mu(x) < \infty.
$$
To simplify the notation, we will define $\varphi(x,t):=\varphi(x,\vert t\vert)$ for $t<0$ to just write $\varphi(x,f(x))$ instead of $\varphi(x,\vert f(x)\vert)$ when considering possible $f(x)<0$ in the modular above. The associated Luxemburg norm is defined as
$$
\lVert f\rVert_{\varphi} := \inf \left\{ r>0 : \rho_{\varphi}\left(f/r\right)\leq 1 \right\}.
$$
By $B_{\lphi}$ we denote the unit ball of $\lphi$. The Musielak-Orlicz spaces generalize Orlicz spaces by considering different Young functions at each $x\in\Omega$. 
If $\Omega=\mathbb{N}$ with the counting measure, then $\varphi(n,\cdot)=(\varphi_n)$ and $\lphi=\ell_{\varphi_n}$ is called the {\it Musielak-Orlicz sequence space}. We write $x=(x_n)\in \ell_{\varphi}$ and
$$
\lVert(x_n)\rVert_{\varphi_n}=\inf\left\{ r>0: \sum_{n=1}^{\infty}\varphi_n \left(\left\vert\frac{x_n}{r}\right\vert \right)\leq 1 \right\}.
$$

The {\it conjugate function} $\varphi^*$ of $\varphi$ is the generalized $\Phi$-function defined for all $u\geq 0$ and $y\in\Omega$ by the formula $\varphi^*(y,u)=\sup_{t\geq 0}(tu-\varphi(y,t))$. Young's inequality (\cite{DHHR} Eq. 2.6.2) states that, for every $t,u\geq 0$ and $y\in\Omega$,
\begin{equation}\label{Young-ineq}
    t\cdot u\leq \varphi(y,t)+\varphi^*(y,u).
\end{equation}

$\big(\lphi,\lVert\cdot\rVert_{\varphi}\big)$ is a Banach function space with the usual pointwise order if and only if the generalized $\Phi$-function $\varphi$ is {\it proper}, meaning that the simple functions belong to both $\lphi$ and its {\it associate space $(\lphi)'$}, which is the space of functions $g\in L_0(\Omega)$ such that $\sup_{f\in \lphi, \lVert f\rVert_\varphi\leq 1} \int \vert f(x)g(x)\vert d\mu<\infty$ (see \cite{DHHR} page 61). Also in that case, its associate space $(\lphi)'$ coincides with the Musielak-Orlicz space of the conjugate function $\lphiconj$ (\cite{DHHR} Thm 2.7.4). $\varphi$ is said to be {\it locally integrable} if $\rho_{\varphi}(t\chi_{E})<\infty$ for every $t\geq 0$ and $E\subset\Omega$ with $\mu(E)<\infty$.


A generalized $\Phi$-function $\varphi$ satisfies the {\it $\Delta_2$-condition} if there exists $K\geq 2$ such that $\varphi(y,2t)\leq K\varphi(y,t)$ for all $y\in\Omega$ and $t\geq 0$. If $\varphi$ satisfies the $\Delta_2$-condition, then proper implies locally integrable for $\varphi$ (since $\rho_{\varphi}(2^t\chi_E)\leq K^t \rho_{\varphi}(\chi_E)<\infty$) and $\lphi=E^\varphi$ (\cite{DHHR} page 49), where $E^{\varphi}:=\{f\in \lphi: \rho_{\varphi}(f/r)<\infty \text{ for all } r>0\}$. 
In that case, the space has a lot of nice properties: it is separable (\cite{DHHR} Thm 2.5.10), order continuous (see f.i. \cite{Hudzik} page 545), modular convergence (respectively boundedness) and norm convergence (resp. boundedness) are equivalent (\cite{DHHR} page 42) and its dual space coincides with the Musielak-Orlicz space generated by its conjugate function, $(\lphi)^*=L^{\varphi^*}(\Omega)$ (\cite{DHHR} Thm 2.7.14).

A Young function is an {\it $N$-function} if it is continuous, positive, $\lim_{t\rightarrow 0}\frac{\varphi(t)}{t}=0$ and $\lim_{t\rightarrow \infty}\frac{\varphi(t)}{t}=\infty$. A generalized $\Phi$-function is a {\it generalized $N$-function} if $\varphi(y,\cdot)$ is an $N$-function for every $y\in\Omega$. This class of $\Phi$-functions excludes spaces like $L_1(\Omega)$ (by making $\varphi$ more convex than $\psi(t)=t$) and $L_\infty(\Omega)$ (by making $\varphi$ continuous. Also excluded if $\varphi$ satisfies the $\Delta_2$-condition).

Finally, a subset $S$ of $\lphi$ is {\it equi-integrable} in $\lphi$ if, for every decreasing sequence of measurable sets $(A_n)$ in $\Omega$ with zero measure intersection, $\lim_{n\rightarrow\infty} \sup_{f\in S} \, \lVert f\chi_{A_n}\rVert_{\varphi}=0$.
For order continuous Banach function spaces, equi-integrability is equivalent to $S$ satisfying both conditions $\lim_{\mu(A)\rightarrow 0} \sup_{f\in S} \, \lVert f\chi_{A}\rVert=0$ (uniformly integrable) and that, for every $\varepsilon>0$, there exists $A\subset\Omega$ with $\mu(A)<\infty$ such that $\sup_{f\in S} \, \lVert f \chi_{\Omega\setminus A}\rVert<\varepsilon$ (uniform decay at infinity) (cf. \cite{PCM2} Proposition 3.1). 


\section{Weak compactness criteria}\label{sec.debilcompacidad}

W. Luxemburg collected and proved in his thesis the following result for general Banach function spaces $X(\Omega)=X$ on $\sigma$-finite measure spaces $(\Omega,\mu)$ (where $X'$ denotes its associate space or K\"othe dual and $\sigma(X,X')$ the weak topology defined by $X'$).

\begin{teorema}[\cite{Luxemburg} Theorem 5, page 32]
A subset $S$ in $X$ is relatively $\sigma(X, X')$-compact if and only if the mapping on $X'$ defined by
$$
N(g):=\sup_{f\in S} \int_{\Omega} \vert fg\vert d\mu
$$
is an absolutely continuous normal semi-norm.
\end{teorema}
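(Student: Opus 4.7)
The plan is to prove both implications of the biconditional separately, exploiting the canonical isometric pairing $\langle X,X'\rangle$ via $(f,g)\mapsto \int_\Omega fg\,d\mu$ together with Hölder's inequality $\int_\Omega |fg|\,d\mu \leq \|f\|_X\|g\|_{X'}$. First note that regardless of which direction we prove, $N$ is obviously a seminorm (homogeneity and subadditivity pass through the supremum) and is normal (monotone with respect to the pointwise order on $X'$) directly from its definition.

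For the necessity direction ($\Rightarrow$), assume $S$ is relatively $\sigma(X,X')$-compact. I would fix $g\in X'$ and consider the multiplication operator $M_g\colon X\rightarrow L^1(\mu)$, $M_g(f) = fg$. By the Hölder inequality recalled above, $M_g$ is bounded linear with $\|M_g\|\leq \|g\|_{X'}$, so it is weak-to-weak continuous; hence $M_g(S)$ is relatively weakly compact in $L^1(\mu)$. The classical Dunford-Pettis theorem then yields that $M_g(S)$ is uniformly integrable in $L^1$, which is exactly the statement that $\sup_{f\in S}\int_{A_n}|fg|\,d\mu = N(g\chi_{A_n}) \rightarrow 0$ for every decreasing sequence $(A_n)$ of measurable sets with $\mu(\bigcap A_n)=0$. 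This is the absolute continuity of $N$.

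For the sufficiency direction ($\Leftarrow$), assume $N$ is absolutely continuous and normal on $X'$. First I would verify that $S$ is norm-bounded: $N$ is a seminorm defined (finite) on the whole Banach space $X'$, so by the uniform boundedness principle $N(g)\leq C\|g\|_{X'}$ for some $C>0$; using the isometric inclusion $X \hookrightarrow (X')^*$ (valid for Banach function spaces) this yields $\sup_{f\in S}\|f\|_X\leq C$. Then embed $S$ into $(X')^*$ by $f\mapsto \phi_f$, $\phi_f(g)=\int fg\,d\mu$. By Banach-Alaoglu, the weak$^*$-closure $\overline{S}^{\,w^*}$ is compact in $(X')^*$. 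The task is to show that each $\phi\in\overline{S}^{\,w^*}$ is represented by an actual element of $X$, which is where the absolute continuity of $N$ is decisive: for any countable partition $\Omega=\bigsqcup E_k$, the tails $g\chi_{\bigcup_{k\geq n} E_k}$ satisfy $N(\,\cdot\,)\rightarrow 0$, which forces $\phi$ to be $\sigma$-additive on partitions (pass through the uniform estimate $|\phi(g\chi_{A})|\leq N(g\chi_A)$ inherited from weak$^*$-limits). A Radon-Nikodym argument then produces the representing function $h\in X$, so $\phi=\phi_h$ and $\overline{S}^{\,w^*}\subseteq X$, proving relative $\sigma(X,X')$-compactness.

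The main obstacle is the sufficiency direction, and within it the step converting absolute continuity of the abstract seminorm $N$ into $\sigma$-additivity of weak$^*$-limit functionals so that Radon-Nikodym applies. Once this representability is established, everything else follows from standard duality and Banach-Alaoglu; the role of being in a Banach function space (as opposed to an abstract Banach space) enters precisely here.
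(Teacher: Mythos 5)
This theorem is imported from Luxemburg's thesis; the paper states it with a citation and gives no proof, so there is no in-paper argument to compare yours against. Judged on its own, your outline has a reasonable overall shape, but two steps do not hold as written. In the necessity direction you infer that $M_g(S)$ is relatively weakly compact in $L^1$ because $M_g$ is norm-bounded and ``hence weak-to-weak continuous''. Norm-boundedness gives continuity from $\sigma(X,X^*)$ to $\sigma(L^1,(L^1)^*)$, whereas the hypothesis only gives relative $\sigma(X,X')$-compactness, and $\sigma(X,X')$ is in general strictly coarser than the weak topology (take $X=L^\infty[0,1]$, $X'=L^1$: the unit ball is $\sigma(X,X')$-compact but not weakly compact). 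The step is repairable --- for $h\in L^\infty$ one has $|gh|\le\|h\|_\infty|g|$, so $gh\in X'$ since $X'$ is an ideal, and therefore $M_g$ is $\sigma(X,X')$-to-$\sigma(L^1,L^\infty)$ continuous --- but that is the argument you need, not the one you gave. You should also invoke the $\sigma$-finite form of Dunford--Pettis (uniform integrability \emph{plus} uniform smallness outside a set of finite measure), since that combination is what the ``decreasing $A_n$ with null intersection'' formulation of absolute continuity encodes.

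The more serious gap is in the sufficiency direction, which remains a plan rather than a proof. The uniform boundedness step controls $\sup_{f\in S}\|f\|_{(X')^*}=\sup_{f\in S}\|f\|_{X''}$, which coincides with $\sup_{f\in S}\|f\|_X$ only when $X=X''$ (the Fatou property); and the decisive step --- showing that every weak$^*$ cluster point $\phi$ of $S$ in $(X')^*$ is represented by some $h\in X$ (not merely $h\in X''$ or $h\in L_0$) and that the corresponding subnet converges to $h$ in $\sigma(X,X')$ --- is exactly where Luxemburg's machinery of normal and integral functionals and the decomposition of $(X')^*$ into an order-continuous part and a singular part does the work. You correctly identify this as the main obstacle, but you do not carry it out: the passage from absolute continuity of $N$ to $\sigma$-additivity of $\phi$, the Radon--Nikodym step, and the membership $h\in X$ are all asserted rather than proved. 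As it stands, the harder implication is missing its core.
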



In our setting of Musielak-Orlicz spaces $X=L^\varphi(\Omega)$ with $\varphi$ being proper, the associate space $X'$ is the Musielak-Orlicz space of the conjugate function $X'=L^{\varphi^*(\Omega)}$. Then, $N(g)$ is finite for every $g\in L^{\varphi^*(\cdot)}(\Omega)$ and hence $N(\cdot)$ defined is a semi-norm on $L^{\varphi^*(\cdot)}(\Omega)$ if and only if $S$ is norm bounded (\cite{Luxemburg} page 19). It is clear from the definition that $N(\cdot)$ is a normal semi-norm (see \cite{Luxemburg} page 20). And the property of $N(\cdot)$ be absolutely continuous is equivalent to say that, for every $g\in L^{\varphi^*(\cdot)}(\Omega)$, the set $S_g:=\{ fg: f\in S\}$ is an equi-integrable subset in $L_1(\Omega)$ (see \cite{Luxemburg}, page 22, also \cite{PCM2}). Finally, if $\varphi$ satisfies the $\Delta_2$-condition, then $L^{\varphi^*(\cdot)}(\Omega)=(L^{\varphi}(\Omega))^*$ and the $\sigma(X,X')$ is the weak topology. Thus, we have the first weak compactness criterion:

\begin{teorema}\label{clave}
Let $\lphi$ be a Musielak-Orlicz space for a $\sigma$-finite measure space $(\Omega,\mu)$ and a proper generalized $\Phi$-function $\varphi$ satisfying the $\Delta_2$-condition. A subset $S\in \lphi$ is relatively weakly compact if and only if it is bounded and, for every $g\in L^{\varphi^*}(\Omega)$, both
    \begin{equation}\label{basico}
    \lim_{\mu(E)\rightarrow 0}\sup_{f\in S}\int_E \vert fg\vert d\mu =0
    \end{equation}
    and, for every $\varepsilon>0$, there exists some $A\subset\Omega$ with $\mu(A)<\infty$ such that
    \begin{equation}\label{fuerafinito}
    \sup_{f\in S} \int_{\Omega\setminus A} \vert fg\vert d\mu <\varepsilon.
    \end{equation}
\end{teorema}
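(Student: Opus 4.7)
The plan is to apply the Luxemburg theorem quoted just above to $X = L^{\varphi}(\Omega)$ and translate each ingredient into the Musielak--Orlicz language by exploiting that $\varphi$ is proper and satisfies $\Delta_{2}$. Since $\varphi$ is proper, Theorem 2.7.4 of \cite{DHHR} identifies the associate space as $X' = L^{\varphi^{*}}(\Omega)$. Since additionally $\varphi \in \Delta_{2}$, Theorem 2.7.14 of \cite{DHHR} upgrades this to $(L^{\varphi}(\Omega))^{*} = L^{\varphi^{*}}(\Omega)$, so the topology $\sigma(X,X')$ used by Luxemburg coincides with the weak topology on $L^{\varphi}(\Omega)$. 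Thus relative $\sigma(X,X')$-compactness in Luxemburg's statement becomes exactly relative weak compactness.

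Next I would unpack the hypothesis that $N(g) = \sup_{f\in S}\int_{\Omega}|fg|\,d\mu$ is an absolutely continuous normal seminorm on $L^{\varphi^{*}}(\Omega)$. Normality is automatic from the definition, since $N$ is monotone in $|g|$. Finite-valuedness of $N$ on all of $L^{\varphi^{*}}(\Omega)$, which is what upgrades $N$ from a functional to a seminorm, is equivalent to $S$ being norm bounded in $L^{\varphi}(\Omega)$; one direction follows from Young's inequality \eqref{Young-ineq} applied with the bound $\|f\|_{\varphi}\|g\|_{\varphi^{*}}$ on each $\int |fg|\,d\mu$, and the reverse direction is the discussion on page 19 of \cite{Luxemburg}. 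The actual content of the theorem is then the absolute continuity of $N$, which by page 22 of \cite{Luxemburg} (see also \cite{PCM2}) is precisely the assertion that for each fixed $g \in L^{\varphi^{*}}(\Omega)$ the set $S_{g} = \{\,fg : f \in S\,\}$ is equi-integrable in $L_{1}(\Omega)$.

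To finish, I would apply the characterization of equi-integrability recalled in the Preliminaries: because $L_{1}(\Omega)$ is order-continuous, $S_{g}$ is equi-integrable if and only if it is both uniformly integrable (i.e.\ $\lim_{\mu(E)\to 0}\sup_{f\in S}\int_{E}|fg|\,d\mu = 0$) and uniformly decaying at infinity (i.e.\ for each $\varepsilon>0$ there exists $A\subset\Omega$ of finite measure with $\sup_{f\in S}\int_{\Omega\setminus A}|fg|\,d\mu < \varepsilon$), which are exactly conditions \eqref{basico} and \eqref{fuerafinito}. Chaining these equivalences yields the theorem.

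This proof is essentially a bookkeeping exercise; the only place where one has to be careful is to check that the absolute continuity of $N$ really is equivalent to $L_{1}$-equi-integrability of every $S_{g}$ rather than some weaker pointwise version, but this is the very content of Luxemburg's discussion and requires no new argument. The main substantive inputs doing the work behind the scenes are the $\Delta_{2}$-condition, which delivers both $X^{*} = L^{\varphi^{*}}$ and the order continuity needed to split equi-integrability into the two displayed conditions.
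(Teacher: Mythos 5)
Your proposal is correct and follows essentially the same route as the paper: both reduce the statement to Luxemburg's theorem, identify $X'=L^{\varphi^*}(\Omega)$ via properness, use $\Delta_2$ to equate $\sigma(X,X')$ with the weak topology, translate absolute continuity of $N$ into $L_1$-equi-integrability of each $S_g$, and split that into conditions \eqref{basico} and \eqref{fuerafinito} by order continuity. The only difference is cosmetic: you spell out the boundedness equivalence via H\"older/Young where the paper simply cites Luxemburg.
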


\begin{nota}
If $\Omega$ is a non-atomic measure space, the boundedness of $S$ is a redundant hypothesis since it is deduced from (\ref{basico}) and (\ref{fuerafinito}). If not, it is essential (see \cite{PCM3} Rmk 3.3 and below\footnote{Although only proved for variable Lebesgue spaces, the arguments are the same in this case.}).
\end{nota}

To extend And\^o's criteria we need the following notion:

\begin{definicion}
    We say that the generalized $\Phi$-function $\varphi$ is constrained if
    $$
    \lim_{t\rightarrow 0} \sup_{x\in\Omega} \varphi(x,t)=0 \quad \text{ and } \quad \lim_{t\rightarrow \infty} \inf_{x\in\Omega} \varphi(x,t)=\infty.
    $$
\end{definicion}

\begin{nota}
    This notion is a kind of ``uniformity'' for $x$ in the $\Phi$-functions $\varphi(\cdot,t)$ conforming the generalized $\Phi$-function $\varphi(x,t)$. Note that the $\Delta_2$-condition already is an actual ``uniform'' condition, since the constant $K$ does not depend on $x\in\Omega$, while the $N$-funcion condition is not.
\end{nota}

We pass now, after a lemma, to enunciate the main result, a generalization of a criterion due to Andô for Orlicz spaces \cite{Ando}. This criterion only computes the modular of elements $S$ to show if it is weakly compact, instead of requiring to check every element $g\in \lphiconj$. To prove it, we will need the following lemma, which, although almost trivial, requires that $\varphi$ is constrained, and it is the only part of the article where the constrain of $\varphi$ is used. The proof of the theorem is adapted (to Musielak-Orlicz spaces) from those in \cite{Nowak} and \cite{PCM3} (for Orlicz and variable Lebesgue spaces), themselves adaptations of the original in \cite{Ando}. 

\begin{lema}\label{mallema1}
    Let $\lphi$ be a Musielak-Orlicz space for a constrined generalized $\Phi$-function $\varphi$ and a sequence $(f_n)$ in $B_\lphi$. Then, the sets $B_n:=\{x\in\Omega:\vert f_n(x)\vert>n\}$ satisfy $\lim_{n\rightarrow\infty}\mu(E_n)=0$.
\end{lema}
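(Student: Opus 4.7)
The plan is to exploit the constrained property of $\varphi$ together with the standard modular bound coming from $f_n \in B_{L^\varphi}$. (Note: the conclusion as stated should read $\lim_{n\to\infty}\mu(B_n)=0$; the symbol $E_n$ appears to be a typo for the set $B_n$ just defined.)

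First, I would recall the standard fact for Luxemburg norms that $\|f_n\|_\varphi \leq 1$ implies $\rho_\varphi(f_n) \leq 1$; this comes directly from the definition of $\|\cdot\|_\varphi$ as an infimum and the left-continuity of $\varphi$ in the second variable (see e.g. \cite{DHHR}). Since $(f_n) \subset B_{L^\varphi}$, this bound holds for every $n$.

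Next, the main estimate. Since $\varphi(x,\cdot)$ is non-decreasing, on the set $B_n$ we have $\varphi(x,|f_n(x)|) \geq \varphi(x,n)$, and therefore
\begin{equation*}
1 \;\geq\; \rho_\varphi(f_n) \;\geq\; \int_{B_n} \varphi(x,|f_n(x)|)\, d\mu(x) \;\geq\; \int_{B_n} \varphi(x,n)\, d\mu(x) \;\geq\; \mu(B_n)\cdot \inf_{x\in\Omega}\varphi(x,n).
\end{equation*}
By the constrained hypothesis, $\inf_{x\in\Omega}\varphi(x,n)\to\infty$ as $n\to\infty$, so in particular this infimum is strictly positive for all sufficiently large $n$. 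Rearranging gives
\begin{equation*}
\mu(B_n) \;\leq\; \frac{1}{\inf_{x\in\Omega}\varphi(x,n)} \;\xrightarrow[n\to\infty]{}\; 0,
\end{equation*}
which is the desired conclusion.

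There is no real obstacle here; the argument is a one-line Chebyshev-type estimate. The only delicate point is justifying why one may even divide by $\inf_{x\in\Omega}\varphi(x,n)$, which the second clause of ``constrained'' guarantees for large $n$. This also makes transparent why the author isolates this lemma: it is the unique spot in the article where the second clause of the constrained condition (the uniform lower growth of $\varphi(\cdot,t)$ at infinity) is used, in contrast to the $\Delta_2$ and $N$-function properties which provide uniform control elsewhere.
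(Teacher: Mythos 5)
Your proof is correct and is essentially identical to the paper's: the same Chebyshev-type estimate $\mu(B_n)\inf_{x}\varphi(x,n)\leq \rho_\varphi(f_n)\leq 1$ followed by the second clause of the constrained condition (the paper writes the infimum over $B_n$ first and then passes to the infimum over $\Omega$, a cosmetic difference). You are also right that $E_n$ in the statement is a typo for $B_n$.
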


\begin{proof}
    We have
    $$\inf_{x\in B_n}\varphi(x,n)\mu(B_n)
    \leq \int_{B_n} \varphi(x, f_n(x))d\mu\leq 1,
    $$
    so, since $\varphi$ is constrained,
    $$\lim_{n\rightarrow\infty}\mu(B_n)
    \leq \lim_{n\rightarrow\infty}\frac{1}{\inf_{x\in B_n}\varphi (x,n)}
    \leq \lim_{n\rightarrow\infty}\frac{1}{\inf_{x\in \Omega}\varphi (x,n)}= 0.
    $$
\end{proof}

\begin{nota}
    If $\varphi$ is not constrained and $\mu(\Omega)=\infty$ it can be $\mu(B_n)\geq 1$ even if $\varphi$ is an $N$-function with the $\Delta_2$-condition. Take for example $\Omega=(0,\infty)$, $\varphi(x,t)=\frac{1}{x^2}t^{2}$ and $f_n(x)=x\chi_{[n,n+1)}$.
\end{nota}

\begin{teorema}\label{crit2}
Let $\lphi$ be a Musielak-Orlicz space for a $\sigma$-finite measure space $(\Omega,\mu)$ and a proper constrained generalized $N$-function $\varphi$ satisfying the $\Delta_2$-condition. A subset $S\subset \lphi$ is relatively weakly compact if and only if
\begin{equation}\label{condicion}
\lim_{\lambda\rightarrow 0} \sup_{f\in S} \frac{\rho_{\varphi}(\lambda f)}{\lambda}
=\lim_{\lambda\rightarrow 0} \sup_{f\in S} \frac{1}{\lambda}\int_{\Omega} \varphi(x,\lambda \vert f(x)\vert) d\mu=0.
\end{equation}
\end{teorema}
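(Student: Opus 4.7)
The plan is to invoke Theorem \ref{clave}, which characterizes the relative weak compactness of a subset of $\lphi$ by boundedness together with the conditions (\ref{basico}) and (\ref{fuerafinito}) for every $g\in\lphiconj$. The statement then reduces to proving the equivalence of (\ref{condicion}) with that combined set of conditions, which I would establish in both directions.

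For the forward implication ($\Rightarrow$), assume $S$ is relatively weakly compact and suppose toward a contradiction that (\ref{condicion}) fails. Then there exist $\varepsilon_0>0$, a sequence $\lambda_n\searrow 0$, and $f_n\in S$ such that $\rho_{\varphi}(\lambda_n f_n)/\lambda_n\geq\varepsilon_0$. I would partition $\Omega=(\Omega\setminus A)\sqcup(A\cap\{|f_n|>M\})\sqcup(A\cap\{|f_n|\leq M\})$ for a suitable set $A$ of finite $\mu$-measure (using the $\sigma$-finiteness of $\Omega$ and condition (\ref{fuerafinito}) in a modular form so that the contribution of $\Omega\setminus A$ to $\rho_{\varphi}(\lambda_n f_n)/\lambda_n$ is at most $\varepsilon_0/3$) and large $M$. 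Lemma \ref{mallema1}, applied to $f_n/\|f_n\|_{\varphi}$, gives $\mu(\{|f_n|>M\})\to 0$ uniformly in $n$ as $M\to\infty$; combining this with (\ref{basico}) controls the middle piece by $\varepsilon_0/3$. On $A\cap\{|f_n|\leq M\}$, the integrand $\varphi(x,\lambda_n|f_n|)/\lambda_n$ is majorized by $\varphi(x,\lambda_n M)/\lambda_n$, which tends to $0$ pointwise by the $N$-function property $\varphi(x,t)/t\to 0$ and is dominated by the locally integrable $\varphi(x,M)$ on the finite-measure set $A$; dominated convergence handles the third piece. Summing the three estimates contradicts $\rho_{\varphi}(\lambda_n f_n)/\lambda_n\geq\varepsilon_0$.

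For the reverse implication ($\Leftarrow$), assume (\ref{condicion}). Taking $\lambda_0\in(0,1]$ with $\sup_{f\in S}\rho_{\varphi}(\lambda_0 f)\leq 1$ yields $\|f\|_{\varphi}\leq 1/\lambda_0$, so $S$ is norm bounded. For a given $g\in\lphiconj$, the parameterized Young inequality
\[
|f(x)g(x)|\leq\varphi(x,\lambda|f(x)|)+\varphi^*(x,|g(x)|/\lambda)
\]
with an appropriate choice of $\lambda$ will produce both (\ref{basico}) and (\ref{fuerafinito}): picking $\lambda$ so that $\int_{\Omega}\varphi^*(x,|g|/\lambda)\,d\mu$ is finite makes the second integrand absolutely continuous in $E$ and permits the $\sigma$-finite exhaustion to yield the tail estimate, while the first term is controlled uniformly over $f\in S$ via (\ref{condicion}) together with the $\Delta_2$-condition on $\varphi$ (which lets us transfer modular bounds between scales).

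The main obstacle is the sufficiency direction. The Young-type split has an intrinsic tension: (\ref{condicion}) is most informative when $\lambda$ is small, whereas $\int\varphi^*(x,|g|/\lambda)\,d\mu$ is finite only when $\lambda$ is not too small. Balancing these, together with extracting uniform absolute continuity of $\int_E\varphi(x,\lambda|f|)\,d\mu$ in $f\in S$ from (\ref{condicion}) and $\Delta_2$, is the delicate technical point. The constrained hypothesis enters through Lemma \ref{mallema1} and is precisely what makes the ``large values'' decomposition $\{|f_n|>M\}$ uniform in $n$ in the necessity argument.
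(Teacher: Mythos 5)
Your sufficiency direction is essentially sound and, once repaired, matches the paper's: the fix for the ``tension'' you identify is to decouple the two scales, i.e.\ first fix $r>0$ with $\int_\Omega\varphi^*(x,rg(x))\,d\mu<\infty$ (such an $r$ exists by the definition of $\lphiconj$), and then write $|fg|\leq\frac{1}{\lambda_0 r}\left[\varphi(x,\lambda_0 |f|)+\varphi^*(x,r|g|)\right]$. Since $r$ is fixed independently of $\lambda_0$, the prefactor $\frac{1}{\lambda_0 r}$ is harmless: condition (\ref{condicion}) makes $\sup_{f\in S}\rho_\varphi(\lambda_0 f)$ small \emph{relative to} $\lambda_0$, and the second term is handled by absolute continuity of the integral of the fixed integrable function $\varphi^*(\cdot,rg(\cdot))$. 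Your single-parameter version with $\varphi^*(x,|g|/\lambda)$ would indeed fail, but no balancing act is needed; also no uniform absolute continuity of $\int_E\varphi(x,\lambda|f|)\,d\mu$ has to be ``extracted,'' since the global bound from (\ref{condicion}) already dominates every $\int_E$.

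The genuine gap is in the necessity direction, which you treat as the routine half. Theorem \ref{clave} supplies conditions (\ref{basico}) and (\ref{fuerafinito}) only for bilinear quantities $\int_E|fg|\,d\mu$ against a \emph{given} $g\in\lphiconj$; there is no ``modular form'' of (\ref{fuerafinito}), and relative weak compactness does not by itself make $\frac{1}{\lambda_n}\int_{\Omega\setminus A}\varphi(x,\lambda_n f_n)\,d\mu$ or $\frac{1}{\lambda_n}\int_{\{|f_n|>M\}}\varphi(x,\lambda_n f_n)\,d\mu$ uniformly small (convexity only bounds these by $\int\varphi(x,f_n)\,d\mu$ over the same sets, and a bounded set in a reflexive $\lphi$ is weakly compact without any such modular equi-integrability). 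The missing idea --- the heart of the paper's proof --- is to convert the modular into a bilinear expression by taking $g_n(x):=\varphi'(x,\lambda_n f_n(x))$, for which Young's inequality is an equality, so that $\varphi(x,\lambda_n f_n(x))\leq\lambda_n|f_n(x)g_n(x)|\leq\lambda_n|f_n(x)|\,g(x)$ with $g:=\sup_n|g_n|$; one must then prove $g\in\lphiconj$, using $\varphi^*(x,\varphi'(x,t))\leq\varphi(x,2t)$, the estimate $\varphi(x,2\lambda_n t)\leq 2\lambda_n\varphi(x,t)$, and a subsequence with $\sum_n\lambda_n\leq 1$. Only with this specific $g$ in hand can (\ref{basico}) and (\ref{fuerafinito}) be applied to the large-value and tail pieces of your decomposition. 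Your treatment of the bounded piece $A\cap\{|f_n|\leq M\}$ by domination with $\varphi(x,M)$ and the $N$-function property is correct (and slightly cleaner than the paper's explicit exceptional sets $A_n$), and your uniform-in-$n$ variant of Lemma \ref{mallema1} with threshold $M\to\infty$ is a true and easy modification; but without the construction of $g$ the two problematic pieces cannot be estimated, so the necessity argument as proposed does not close.
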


\begin{proof}

$(\Rightarrow)$:
    First, as $\Omega$ is $\sigma$-finite, take $\Omega=\bigcup_{m\in\mathbb{N}} Z_m$ with $(Z_m)\nearrow \Omega$ and $\mu(Z_m)<\infty$. Since $S$ is weakly compact, it is norm bounded and we can suppose w.l.o.g. $S \subset B_{L^{\varphi}(\Omega)}$. Thus, for every $ f\in S$, we have  $\int_{\Omega}\varphi(x,f(x)) d\mu \leq 1.$
    Suppose that (\ref{condicion}) does not hold, so there exist $\varepsilon>0$, a scalar sequence $(\lambda_n)\searrow 0$ and a sequence $(f_n)$ in $S$ such that, for every $n\in\mathbb{N}$,
    \begin{equation}\label{cont}
    \int_{\Omega} \varphi(x, \lambda_n f_n(x))d\mu  > \lambda_n  \varepsilon
    \end{equation}
    and let us find a contradiction.

    Since $\varphi$ is an $N$-function, given $y\in\Omega$ and a natural $k$, for $\lambda$ small enough we have that $\frac{\varphi(y,k\lambda)}{k\lambda}\leq\frac{\varepsilon}{4k\cdot\mu(Z_k)}$. Hence $\mu(\{x\in Z_k:\frac{\varphi(x,k\lambda)}{k\lambda}>\frac{\varepsilon}{4k\cdot\mu(Z_k)}\})\xrightarrow{\lambda\rightarrow 0}0$. So, by taking a subsequence $(\lambda_{n_k})$ (and $(f_{n_k})$) denoted again $(\lambda_n)$ (and $(f_n)$), we can make the sets $A_n:=\{x\in Z_n: \frac{\varphi(x,n\lambda_n)}{\lambda_n}> \frac{\varepsilon}{4\mu(Z_n)}\}$ satisfy\footnote{of course, it can even be $A_n=\emptyset$ and $\mu(A_n)=0$.} $\mu(A_n)\leq \frac{\varepsilon}{4n}$.
    Thus (up to subsequences again in $(\lambda_n)$ and $(f_n)$) we can suppose that $(\lambda_n)$ verifies the properties (along with (\ref{cont})):
    \begin{equation}\label{coleccion}
    0\leq \lambda_n\leq \frac{1}{2n}, \quad
    \sum_{n=1}^{\infty} \lambda_n\leq 1, \quad
    \mu(A_n)\leq \frac{\varepsilon}{4n}, \quad
    \sup_{x\in Z_{n}\setminus A_n}\frac{\varphi(x,n \lambda_n)}{\lambda_n}\mu(Z_n)
    \leq\frac{\varepsilon}{4}.
    \end{equation}

    For the $N$-function $\varphi$, at each point $y\in\Omega$, we have its right-derivative $\varphi'(y,\cdot)$ satisfying $\varphi(y,t)=\int_0^t \varphi'(y,s) d\mu(s)$ (\cite{DHHR} page 53). For this right-derivative, Young's inequality (\ref{Young-ineq}) holds with equality (\cite{DHHR} Rmk 2.6.9), i.e. for every $y\in\Omega$, $t\geq 0$ and $u=\varphi'(y,t)$ we have $t\cdot u=\varphi(y,t)+\varphi^*(y,u)$. So, let us consider the functions $g_n(x):=\varphi'(x,\lambda_n f_n(x))$ which satisfy, for every $y\in \Omega$,
    \begin{equation}\label{Young-equality}
        \lambda_n f_n(y) g_n(y)= \varphi(y,\lambda_n f_n(y))+\varphi^*(y,g_n(y)).
    \end{equation}
    With this sequence define the function $g(t):=\sup_n \vert g_n(t)\vert$. We have that $g\in \lphiconj$. Indeed, $t\varphi'(x,t)\leq \varphi(x,2t)$ for every $x,t$ (\cite{DHHR} Lemma 2.6.6) applied to $\varphi^*(x,\varphi'(x,t))\leq t \varphi'(x,t)$ (\cite{DHHR} Eq. 2.6.15) gives
    $$\varphi^*(x,g_n(x))\leq \varphi(x,2\lambda_nf_n(x)).$$
    Then, using $\varphi(x,\lambda t)\leq \lambda \varphi(x,t)$ for $0<\lambda <1$ (\cite{DHHR} Eq. 2.3.8) and the above properties in (\ref{coleccion}), we have
    \begin{align*}
        \sup_{n\in\mathbb{N}}\int_{\Omega} \varphi^*(x, g_n(x)) d\mu
        {\leq}{}   &   \sum_{n=1}^\infty \int_{\Omega}  \varphi^*(x, g_n(x)) d\mu
        {\leq} \sum_{n=1}^\infty \int_{\Omega} \varphi(t,2\lambda_n f_n(x)) d\mu\\
        {\leq}{}   &    \sum_{n=1}^\infty 2\lambda_n\int_{\Omega} \varphi(x, f_n(x)) d\mu
        {\leq} \sum_{n=1}^\infty 2\lambda_n
        \leq  2.
    \end{align*}
    So the modular (and norm) of the sequence of functions $(\sup_{1\leq n\leq k} \vert g_n\vert)_k$ is bounded and, by Fatou property (\cite{DHHR} Thm 2.3.17 (d)), $g\in\lphiconj$ as $(\sup_{1\leq n\leq k} \vert g_n\vert)_k\nearrow g$ a.e.-$\mu$.

    Consider now the sets $B_n=\{x\in\Omega:\vert f_n(x)\vert>n\}$. 
    By Lemma \ref{mallema1} we have $\mu(B_n)\xrightarrow{n\rightarrow\infty}0$.
    Then, by (\ref{basico}) in Theorem \ref{clave}, there exists a natural $n_1$ such that, for every $n\geq n_1$,
    $$
    \int _{B_n}\vert f_n g\vert d\mu <\frac{\varepsilon}{4}.
    $$
    Also, by (\ref{fuerafinito}) in the same Theorem \ref{clave}, there exists a natural $n_2$ such that, for every $n\geq n_2$,
    $$
    \int _{\Omega\setminus Z_n}\vert f_n g\vert d\mu <\frac{\varepsilon}{4}.
    $$
    Indeed, there exists $A$ with $\mu(A)<\infty$ such that, for all $n\in\mathbb{N}$,
    $$
    \int_{\Omega\setminus A} \vert f_n g \vert d\mu<\frac{\varepsilon}{8}.
    $$
    Now, consider $(A\cap Z_n)\nearrow A$, so $\mu(A\setminus(A\cap Z_n))\rightarrow 0$ as $n\rightarrow\infty$. Hence,
    $$
    \int_{\Omega\setminus(A\cap Z_n)} \vert f_n g\vert d\mu
    =\int_{\Omega\setminus A} \vert f_n g\vert d \mu + \int_{A\setminus(A\cap Z_n)} \vert f_n g \vert d\mu.
    $$
    As $\mu(A\setminus(A\cap Z_n))\rightarrow 0$, by (\ref{basico}) in Theorem \ref{clave}, there exists this natural $n_2$ such that, for $n\geq n_2$,
    $$
    \int_{A\setminus A\cap Z_n} \vert f_n g\vert d\mu <\frac{\varepsilon}{8}.
    $$
    Thus, for $n\geq n_2$,
    $$
    \int_{\Omega\setminus Z_n} \vert f_n g\vert d\mu
    \leq \int_{\Omega\setminus (A\cap Z_n)} \vert f_n g\vert d\mu
    =\int_{\Omega\setminus A} \vert f_n g\vert d \mu + \int_{A\setminus(A\cap Z_n)} \vert f_n g \vert d\mu
    =\frac{\varepsilon}{8}+\frac{\varepsilon}{8}=\frac{\varepsilon}{4}.
    $$

    Therefore, using all of the above including (\ref{Young-equality}) and (\ref{coleccion}) we conclude that, for $n\geq \max\{n_1,n_2\}$,
    \begin{align*}
        \int_{\Omega} \varphi(x,\lambda_n f_n(x)) d\mu {}    &   \leq\int_{B_n} \varphi(x,\lambda_n f_n(x)) d\mu + \int_{\Omega\setminus Z_n} \varphi(x,\lambda_n f_n(x)) d\mu + \int_{Z_n\setminus B_n} \varphi(x,\lambda_n f_n(x)) d\mu\\
        {}  &   \leq \int_{B_n} \vert \lambda_n f_n(x) g_n(x)\vert d\mu + \int_{\Omega\setminus Z_n} \vert \lambda_n f_n(x) g_n(x)\vert d\mu + \int_{Z_n\setminus B_n} \varphi(x,\lambda_n n) d\mu\\
        {}  &  \leq \lambda_n \frac{\varepsilon}{4} +\lambda_n \frac{\varepsilon}{4} + \int_{(Z_n\setminus B_n)\cap A_n} \varphi(x,\lambda_n n) d\mu + \int_{(Z_n\setminus B_n)\setminus A_n} \varphi(x,\lambda_n n) d\mu\\
        {}  &   \leq \lambda_n \frac{\varepsilon}{2} + \sup_{x\in A_n} \varphi(x,\lambda_n n) \mu(A_n) + \sup_{x\in Z_n\setminus A_n} \varphi(x,\lambda_n n) \mu(Z_n)\\
        {}  &   \leq \lambda_n \frac{\varepsilon}{2} + \lambda_n n \frac{\varepsilon}{4n} + \lambda_n\frac{\varepsilon}{4}\\
        {}  &   =\lambda_n  \varepsilon,
    \end{align*}
    which is a contradiction with (\ref{cont}).

$(\Leftarrow):$
    Let us first show that $S$ is bounded. If not, given any $0<\lambda\leq 1$, there would exist $f_\lambda\in S$ such that $\lVert f_\lambda\rVert_{\varphi} > \frac{1}{\lambda}$ or, equivalently, $\rho_{\varphi}(\frac{f_\lambda}{\frac{1}{\lambda}})=\rho_{\varphi}(\lambda f_\lambda) >1$. Thus, for every $0<\lambda<1$,
    $$
    \sup_{f\in S}\frac{1}{\lambda}\int_{\Omega} \varphi(x,\lambda f(x)) d\mu
    =\sup_{f\in S}\frac{1}{\lambda}\rho_{\varphi}(\lambda f)
    \geq\sup_{f\in S}\frac{1}{\lambda}\rho_{\varphi}(\lambda f_\lambda)
    > \frac{1}{\lambda}>1,
    $$
    which contradicts (\ref{condicion}).

    Now, let $g\in \lphiconj$ and  $r>0$  such that $\int_{\Omega} \varphi^*(x, rg(x))d\mu<\infty$. By hypothesis, given $\varepsilon>0$, there exists $\lambda_0>0$ such that
    $$
    \sup_{f\in S}\frac{1}{\lambda_0}\int_{\Omega} \varphi(x, \lambda_0 f(x)) d\mu < \frac{\varepsilon r}{2}.
    $$
    Take $\delta>0$ such that, for every measurable set $E$ with $\mu(E)<\delta$,
    $$\int_E \varphi^*(x, rg(x)) d\mu <\frac{\lambda_0 r \varepsilon}{2},$$
    and take $A\subset \Omega$ with $\mu(A)<\infty$ big enough such that
    $$\int_{\Omega\setminus A} \varphi^*(x, rg(x))d\mu <\frac{\lambda_0 r \varepsilon}{2}.$$
    Then, using  Young's inequality (\ref{Young-ineq}),  we have
    $$
    \sup_{f\in S}\int_E\vert f(x)g(x)\vert d\mu
    \leq \frac{1}{\lambda_0 r}\left[\sup_{f\in S}\int_E \varphi(x, \lambda_0 f(x)) d\mu + \int_E \varphi^*(x, rg(x)) d\mu \right]
    < \frac{1}{r}(\frac{\varepsilon r}{2})+\frac{1}{\lambda_0 r}(\frac{\lambda_0 r \varepsilon}{2})
    =\varepsilon,
    $$
    as well as
    $$
    \sup_{f\in S}\int_{\Omega\setminus A}\vert f(x)g(x)\vert d\mu
    \leq \frac{1}{\lambda_0 r}\left[\sup_{f\in S}\int_{\Omega\setminus A} \varphi(x, \lambda_0 f(x)) d\mu + \int_{\Omega\setminus A} \varphi^*(x, rg(x)) d\mu \right]
    < \frac{1}{r}(\frac{\varepsilon r}{2})+\frac{1}{\lambda_0 r}(\frac{\lambda_0 r \varepsilon}{2})
    =\varepsilon.
    $$
    Thus, applying Theorem \ref{clave}, we conclude that $S$ is relatively weakly compact in $\lphi$.
\end{proof}


In particular, for $(\Omega,\mu)$ be $\mathbb{N}$ with the counting measure, we have:

\begin{corolario}\label{corolario-Nakano1}
Let $\ell_{(\varphi_n)}$ be a Musielak-Orlicz sequence space for a proper constrained generalized $N$-function $(\varphi_n)$ satisfying the $\Delta_2$-condition. A subset $S\subset \ell_{(\varphi_n)}$ is relatively weakly compact if and only if
$$
\lim_{\lambda\rightarrow 0} \sup_{x=(x_n)\in S} \frac{1}{\lambda}\rho_{(\varphi_n)}(\lambda x)
=\lim_{\lambda\rightarrow 0} \sup_{x=(x_n)\in S} \frac{1}{\lambda}\sum_{n=1}^{\infty} \varphi_n(\lambda x_n)=0.
$$
\end{corolario}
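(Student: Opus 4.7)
The plan is to apply Theorem \ref{crit2} directly to the measure space $(\mathbb{N},\mu)$ with $\mu$ the counting measure, since a Musielak-Orlicz sequence space $\ell_{(\varphi_n)}$ is by definition the Musielak-Orlicz space $\lphi$ for $\Omega=\mathbb{N}$ and $\varphi(n,t)=\varphi_n(t)$.

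First I would verify that all the ambient hypotheses of Theorem \ref{crit2} are inherited in this discrete setting: $(\mathbb{N},\mu)$ is $\sigma$-finite (take $Z_m=\{1,\ldots,m\}$) and separable, and the generalized $\Phi$-function determined by $(\varphi_n)$ is, by assumption, a proper constrained generalized $N$-function satisfying the $\Delta_2$-condition. Next I would note that, under the counting measure, the modular reduces to the sum
$$\rho_{(\varphi_n)}(\lambda x)=\int_{\mathbb{N}}\varphi(n,\lambda|x_n|)\,d\mu(n)=\sum_{n=1}^{\infty}\varphi_n(\lambda|x_n|),$$
so that condition (\ref{condicion}) of Theorem \ref{crit2} translates verbatim into the sum condition in the statement of the corollary. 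Applying Theorem \ref{crit2} then yields the desired equivalence.

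There is no substantive obstacle here, because the proof of Theorem \ref{crit2} nowhere relies on non-atomicity of $(\Omega,\mu)$; all its ingredients (Lemma \ref{mallema1} on the level sets $\{|f_n|>n\}$, the $\sigma$-finite exhaustion $(Z_m)$, the Young-type equality for the right derivative of an $N$-function, and the equi-integrability characterization provided by Theorem \ref{clave}) carry over verbatim to the counting measure on $\mathbb{N}$, where the role of $\lphiconj$ is played by the Musielak-Orlicz sequence space generated by the pointwise conjugates $(\varphi_n^*)$. In particular the remark after Theorem \ref{clave} about atomic measure spaces does not affect the present statement, since boundedness of $S$ is already derived from the displayed modular condition in the sufficiency direction of Theorem \ref{crit2}.
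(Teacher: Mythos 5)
Your proposal is correct and coincides with the paper's own treatment: the corollary is stated there as an immediate specialization of Theorem \ref{crit2} to $\Omega=\mathbb{N}$ with the counting measure, with no separate argument given. Your additional checks (that the hypotheses are inherited, that the modular becomes the sum, and that boundedness in the sufficiency direction is recovered from the modular condition itself rather than from Theorem \ref{clave}) are all accurate and only make explicit what the paper leaves implicit.
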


A consequence of Theorem \ref{crit2} is the weakly Banach-Saks property under the extra hypothesis that $\lphi$ satisfies the subsequence splitting property. Recall that a Banach space $X$ is said to be weakly Banach-Saks if every weakly convergent sequence $f_n\xrightarrow{w} f$ has a Cèsaro convergent subsequence $(f_{n_k})$, i.e.
$$
\left\lVert\frac{f_{n_1}+...+f_{n_k}}{k}-f\right\rVert_X\xrightarrow{k\rightarrow \infty}0.
$$

\begin{teorema}\label{wBS2}
Let $\lphi$ be a Musielak-Orlicz space for a $\sigma$-finite measure space $(\Omega,\mu)$ and a proper constrained generalized $N$-function $\varphi$ satisfying the $\Delta_2$-condition. If $\lphi$ has the subsequence splitting property, then it is weakly Banach-Saks. 
\end{teorema}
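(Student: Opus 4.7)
The plan is to apply the SSP to decompose a weakly convergent subsequence into an equi-integrable part and a disjointly supported part, and to show that the Ces\`aro means of each part tend to zero in norm. Without loss of generality assume $f_n\xrightarrow{w}0$. Since $(f_n)$ is norm bounded, the SSP yields a subsequence $(f_{n_k})$ with a decomposition $f_{n_k}=g_k+h_k$, where $(g_k)$ is equi-integrable, $(h_k)$ is pairwise disjointly supported, and $|g_k|,|h_k|\le |f_{n_k}|$. Since $\{f_{n_k}\}$ is relatively weakly compact, Theorem \ref{crit2} provides condition (\ref{condicion}) for $(f_{n_k})$, which passes to both $(h_k)$ and $(g_k)$ via the pointwise bounds.

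For the disjoint part, modular additivity on disjoint supports gives $\rho_{\varphi}\bigl(\frac{1}{m}\sum_{k=1}^{m}h_k\bigr)=\sum_{k=1}^{m}\rho_{\varphi}(h_k/m)$. Given $\varepsilon>0$, condition (\ref{condicion}) for $(h_k)$ provides $\lambda_0>0$ with $\rho_{\varphi}(\lambda h_k)\le \lambda\varepsilon$ for all $\lambda\le\lambda_0$ and $k\in\NN$; for $m\ge 1/\lambda_0$ this yields $\rho_{\varphi}(h_k/m)\le \varepsilon/m$ and hence $\rho_{\varphi}\bigl(\frac{1}{m}\sum_{k=1}^{m}h_k\bigr)\le\varepsilon$. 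The modular of the Ces\`aro means of $(h_k)$ thus tends to zero, and by the $\Delta_2$-condition so does their norm.

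For the equi-integrable part, first I argue $g_k\xrightarrow{w}0$: any weakly convergent subsequence $g_{k_j}\to g$ forces $h_{k_j}\to -g$ weakly, but the Ces\`aro argument above applied to the still-disjoint $(h_{k_j})$ gives norm (and hence weak) Ces\`aro convergence to $0$, so $g=0$; combined with the relative weak compactness of $(g_k)$ this forces $g_k\xrightarrow{w}0$. To obtain Ces\`aro norm convergence for a subsequence of $(g_k)$, I exploit the $\sigma$-finiteness of $\Omega$ and the continuous local embedding $\lphi\hookrightarrow L^1$ on finite-measure sets to apply Koml\'os's theorem diagonally along an exhausting family $A_n\nearrow\Omega$ with $\mu(A_n)<\infty$, obtaining a subsequence whose Ces\`aro means converge a.e.\ on $\Omega$; weak nullity together with equi-integrability (Vitali in $L^1$) forces the a.e.\ limit to be $0$, and Vitali's convergence theorem in the order continuous space $\lphi$ upgrades a.e.\ convergence plus equi-integrability to $\lphi$-norm convergence. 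Summing both Ces\`aro limits along the chosen subsequence completes the proof. The main obstacle is this last step: equi-integrability together with weak nullity does not imply norm nullity in general (witness the Rademacher functions in $L^2$), so one must pass to Ces\`aro means and combine Koml\'os's theorem with Vitali's convergence theorem.
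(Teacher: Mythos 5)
Your proof is correct, but it takes a genuinely different route from the paper's. The paper disposes of the whole reduction in one line by citing Corollary 3.4 of \cite{FR}, which says that (for spaces with the subsequence splitting property) it suffices to verify the weak Banach--Saks property on pairwise disjoint weakly null sequences; it then carries out only the disjoint-part computation, which is essentially word-for-word your argument for $(h_k)$: condition (\ref{condicion}) from Theorem \ref{crit2} with $\lambda=1/m$, additivity of the modular on disjoint supports, and the $\Delta_2$-condition to pass from modular to norm convergence of the Ces\`aro means. What you do differently is to unpack that citation: you apply the splitting directly to $f_{n_k}=g_k+h_k$, observe that the pointwise domination $|g_k|,|h_k|\le|f_{n_k}|$ transfers (\ref{condicion}) to both pieces, and then handle the equi-integrable part by hand --- weak nullity of $(g_k)$ via the subsequence argument, a diagonal Koml\'os argument along an exhaustion by finite-measure sets (legitimate, since $\chi_A\in L^{\varphi^*}(\Omega)$ for proper $\varphi$ gives the local embedding $\lphi\hookrightarrow L^1(A)$ and hence $L^1$-boundedness), identification of the a.e.\ Ces\`aro limit as $0$ through Vitali in $L^1$ plus weak nullity, and finally an Egorov/Vitali-type upgrade from a.e.\ convergence plus equi-integrability to norm convergence in the order continuous space $\lphi$ (using $\chi_A\in\lphi$). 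All of these steps check out, and your closing remark correctly identifies why the equi-integrable part cannot be dispatched by weak nullity alone. The trade-off: your version is self-contained modulo Koml\'os's theorem, while the paper's is much shorter but leans on the Flores--Ruiz machinery; the two proofs coincide exactly on the disjoint part, which is the only place Theorem \ref{crit2} is actually needed.
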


\begin{proof}
Due to Corollary 3.4 in \cite{FR}, it is enough to prove the weak Banach-Saks property for disjoint sequences. So, assume that $(f_n)$ is a pairwise disjoint weakly convergent (to $f=0$) sequence in $\lphi$. We need to prove that $(f_{n_{k}})$ is Ces\`aro convergent (to $0$) for some subsequence $(f_{n_{k}})$. Since $\varphi$ satisfies the $\Delta_2$-condition, it is enough to prove the modular convergence. As $(f_n)$ is a weakly convergent sequence, it is a relatively weakly compact set, so, by Theorem \ref{crit2}, we have
$$
\lim_{\lambda\rightarrow 0} \sup_{n} \frac{\rho_{\varphi}(\lambda f_n)}{\lambda}=0.
$$
Hence, as $\rho_{\varphi}(g+h)=\rho_{\varphi}(g)+\rho_\varphi(h)$ for disjoint functions $g$ and $h$, we get
\begin{align*}
    \lim_{n\rightarrow\infty}\rho_{\varphi}\left(\frac{f_1+...+f_n}{n}\right)
    ={} &   \lim_{n\rightarrow\infty}\sum_{i=1}^n \rho_{\varphi}\left(\frac{f_i}{n}\right)
    \leq\lim_{n\rightarrow\infty}\sum_{i=1}^n \sup_{1\leq k\leq n}\rho_{\varphi}\left(\frac{f_k}{n}\right)\\
    ={} &   \lim_{n\rightarrow\infty} \sup_{1\leq k\leq n}\left(n\cdot\rho_{\varphi}\left(\frac{f_k}{n}\right)\right) = 0.
\end{align*}
So, the same sequence $(f_n)$ is also Cèsaro convergent to $0$.
\end{proof}

\begin{nota}\label{Nakano-wBS}
    For Musielak-Orlicz sequence spaces it was already known that the weak Banach-Saks property is equivalent to the separability of $\ell_{(\varphi_n)}$ (\cite{Kaminska-Lee} Cor 2.2).
\end{nota}

The second And\^o's type characterization of relatively weakly compact sets in $\lphi$ is given in terms of being bounded subsets in a suitable Musielak-Orlicz spaces $L^{\psi}(\Omega)$.
The following definition was presented in \cite{PCM3}, generalizes the one given by And\^{o} (\cite{Ando}) for Young functions to the class of generalized $\Phi$-functions and is more restrictive than the one introduced in \cite{PCM}.

\begin{definicion}
A generalized $\Phi$-function $\psi(x,t)$ increases uniformly more rapidly than another generalized $\Phi$-function $\varphi(x,t)$ for all\footnote{We use the wording ``for all $t$'' in contrast with ``for large $t$'', which is the silent definition used in \cite{Ando}. Nowak already distinguished between them in \cite{Nowak}, and the distinction is relevant for spaces $\Omega$ of infinite measure.} $t$ if for each $\varepsilon>0$ there exists some $\delta>0$ such that, for all $t\geq 0$ and a.e. $x\in \Omega$,
$$
\varepsilon\psi(x,t) \geq  \frac{1}{\delta}\varphi(x,\delta t).
$$
Or, equivalenty, that for every $d>0$ there exists $s>0$ such that, for all $\overline{t}\geq 0$ and a.e. $x\in\Omega$,
$$
\frac{\psi(x,s \overline{t})}{s}\geq d\cdot\varphi(x,\overline{t}).
$$
\end{definicion}


\begin{teorema}\label{Th.M-O}
Let $\lphi$ be a Musielak-Orlicz space for a $\sigma$-finite measure space $(\Omega,\mu)$ and a proper constrained generalized $N$-function $\varphi$ satisfying the $\Delta_2$-condition. A subset $S\subset\lphi$ is relatively weakly compact if and only if there exists a generalized $\Phi$-function $\psi(x,t)$ increasing uniformly more rapidly than $\varphi(x,t)$ for all $t$ such that $S$ is norm bounded in $L^{\psi}(\Omega)$ (even $S\subset B_{L^{\psi}(\Omega)}$).
\end{teorema}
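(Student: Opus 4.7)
The sufficiency direction $(\Leftarrow)$ is short. Suppose $S\subset B_{L^\psi(\Omega)}$ with $\psi$ increasing uniformly more rapidly than $\varphi$. Given $\varepsilon>0$, pick $\delta>0$ with $\varepsilon\psi(x,t)\geq \varphi(x,\delta t)/\delta$ for all $t,x$. Integrating against each $f\in S$ gives
\[
\frac{\rho_\varphi(\delta f)}{\delta}\;\leq\;\varepsilon\,\rho_\psi(f)\;\leq\;\varepsilon.
\]
Since $\varphi(x,0)=0$ and $\varphi(x,\cdot)$ is convex, the map $\lambda\mapsto \rho_\varphi(\lambda f)/\lambda$ is non-decreasing, so the bound above extends to every $\lambda\leq \delta$. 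Condition (\ref{condicion}) of Theorem~\ref{crit2} is thus satisfied, and $S$ is relatively weakly compact.

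For the necessity $(\Rightarrow)$, Theorem~\ref{crit2} yields $\lim_{\lambda\to 0}\sup_{f\in S}\rho_\varphi(\lambda f)/\lambda=0$, and I would use this freedom to choose $\mu_n\searrow 0$ with $\sup_{f\in S}\rho_\varphi(\mu_n f)/\mu_n\leq c/n^{3}$ for a small absolute constant $c>0$. The natural candidate is
\[
\psi(x,t)\;:=\;\sum_{n=1}^{\infty}n\,\frac{\varphi(x,\mu_n t)}{\mu_n}.
\]
Each summand is a $\Phi$-function in $t$ and measurable in $x$, so the (possibly $+\infty$-valued) sum is convex, left-continuous, jointly measurable, and diverges as $t\to\infty$. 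Integration yields
\[
\rho_\psi(f)\;=\;\sum_{n=1}^{\infty}n\,\frac{\rho_\varphi(\mu_n f)}{\mu_n}\;\leq\;c\sum_{n=1}^{\infty}\frac{1}{n^{2}},
\]
and picking $c$ small enough forces $S\subset B_{L^\psi(\Omega)}$. The pointwise lower bound $\psi(x,t)\geq n\varphi(x,\mu_n t)/\mu_n$ gives, on setting $\delta=\mu_n$, the inequality $\varphi(x,\delta t)/\delta\leq \psi(x,t)/n$; choosing $n\geq 1/\varepsilon$ proves that $\psi$ is uniformly more rapid than $\varphi$.

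The main obstacle is verifying that $\psi$ really is a generalized $\Phi$-function in the sense of the paper, in particular that $\lim_{t\to 0^+}\psi(x,t)=0$ for a.e.\ $x$. The naive bound $\varphi(x,\mu_n t)/\mu_n\leq \varphi(x,t)$ is useless because $\sum n=\infty$; instead, one exploits the $N$-function property (which forces $\varphi(x,\mu_n t)/\mu_n = t\cdot \varphi(x,\mu_n t)/(\mu_n t)\to 0$ pointwise) and the freedom to shrink $\mu_n$ as aggressively as needed at each stage, together with the constrained hypothesis to control $\sup_{x}\varphi(x,s)$ on shrinking scales. With a $\sigma$-finite exhaustion $(Z_m)\nearrow\Omega$ and the local integrability of $\varphi$ (a consequence of $\Delta_2$ and properness), a diagonal selection lets one pick $\mu_n$ so small that $\int_{Z_m}\psi(x,t_k)\,d\mu<\infty$ for every $m$ and each $t_k$ in a countable dense subset of $(0,1]$. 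Monotonicity of $\psi(x,\cdot)$ then yields $\psi(x,t)<\infty$ a.e.\ for all small $t$, hence $\lim_{t\to 0^+}\psi(x,t)=0$ a.e., completing the construction.
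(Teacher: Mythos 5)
Your proof follows essentially the same route as the paper: the sufficiency direction is identical (monotonicity of $\lambda\mapsto\rho_\varphi(\lambda f)/\lambda$ plus Theorem~\ref{crit2}), and for necessity you build the same series $\psi(x,t)=\sum_n w_n\,\varphi(x,\mu_n t)/\mu_n$, merely with polynomial weights $w_n=n$ and decay $1/n^3$ where the paper uses $2^n$ and $1/2^{2n}$. Your closing concern about $\psi$ being a genuine generalized $\Phi$-function (finiteness a.e.\ and $\lim_{t\to 0^+}\psi(x,t)=0$) is legitimate and is exactly the point the paper defers to the remark following the theorem; your sketch via dominated convergence and a diagonal choice of the $\mu_n$ is a workable way to settle it.
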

\begin{proof}
$(\Leftarrow)$: Suppose w.l.o.g. that $S\subset B_{L^{\psi}(\Omega)}$, hence $\rho_{\psi}(f)\leq 1$ for every $f\in S$. 
Given $\varepsilon>0$, there exists $\delta>0$ such that, for all $t\geq 0$ and a.e.-$x\in\Omega$,
$$
\varepsilon\psi(x,t) \geq  \frac{1}{\delta}\varphi(x,\delta t).
$$
Then, for each $0<\lambda\leq \delta$,
\begin{align*}
\sup_{f\in S} \frac{1}{\lambda}\int_{\Omega} \varphi(x,\lambda f(x))d\mu
\leq \sup_{f\in S} \frac{1}{\delta}\int_{\Omega} \varphi(x,\delta f(x)) d\mu
\leq \sup_{f\in S}\int_{\Omega} \varepsilon\psi(x,f(x))d\mu
\leq{} \varepsilon.
\end{align*}
So, using Theorem \ref{crit2}, we deduce that $S$ is relatively weakly compact in $\lphi$.

$(\Rightarrow)$:
Assume that $S$ is relatively weakly compact. By Theorem \ref{crit2}, there exists a sequence $(\lambda_n)\searrow 0$ with
\begin{equation}\label{}
\sup_{f\in S}\frac{1}{\lambda_n}\int_{\Omega} \varphi(x, \lambda_n f(x)) d\mu \leq  \frac{1}{2^{2n}}
\end{equation}
for every $n\in\mathbb{N}$. Let us define the function
$$
\psi(x,t):=\sum_{n=1}^{\infty} \frac{2^n}{\lambda_n}\varphi(x, \lambda_n t).
$$

$\psi(x,t)$ is a Musielak-Orlicz function increasing uniformly more rapidly than $\varphi(x,t)$ for all $t$. Indeed, since $\psi(x,t)\geq \frac{2^i}{\lambda_i}\ \varphi(x,\lambda_i t)$ for each natural $i$, given $\varepsilon>0$, we take $n$ so that  $2^n\geq \frac{1}{\varepsilon}$ and $\delta=\lambda_n$, getting
$$\varepsilon\psi(x,t)\geq \frac{1}{\delta} \varphi(x,\delta t).$$
Furthermore, for each $f\in S$, by Beppo-Levi Theorem we get
$$
\int_{\Omega} \psi(x,f(x))d\mu
=\;  \sum_{n=1}^{\infty}\frac{2^n}{\lambda_n}\int_{\Omega} \varphi(x, \lambda_n f(x)) d\mu \;
\leq\;  \sum_{n=1}^{\infty} \frac{2^n}{2^{2n}}
= 1,
$$
so $S\subset B_{L^{\psi}(\Omega)}$, completing the proof.

\end{proof}

\begin{nota}
    The function $\psi$ constructed above can be shown or modified to be finite a.e. $x\in\Omega$ and for all $t\geq 0$. The idea is that, for $y\in\Omega$ with $\psi(y,f(y))<\infty$ for some $f\in S$, $\psi(y,t)<\infty$ for $0\leq t\leq f(y)$ and also $\psi(y,t)<\infty$ for $t> f(y)$ becuase of the $\Delta_2$-condition of $\varphi$; and for every other $y\in\Omega$ there is no interaction with $S$ so we can redefine $\psi$ with any values that satisfy the hypothesis (see \cite{PCM3}, \cite{Ando}, \cite{Nowak}).
\end{nota}


\begin{corolario}
Let $\ell_{\varphi_n}$ be a Musielak-Orlicz sequence space for a proper constrained generalized $N$-function $(\varphi_n)$ satisfying the $\Delta_2$-condition. A subset $S\subset \ell_{\varphi_n}$ is relatively weakly compact if and only if there exists a sequence of Young functions $(\psi_n)$ increasing uniformly more rapidly than $(\varphi_n)$ such that $S$ is bounded in $\ell_{\psi_n}$.
\end{corolario}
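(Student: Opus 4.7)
The plan is to obtain this corollary as an immediate specialization of Theorem \ref{Th.M-O} to the measure space $(\Omega,\mu) = (\mathbb{N},\mu)$, where $\mu$ is the counting measure. First I would verify that $(\mathbb{N},\mu)$ is a $\sigma$-finite measure space (indeed $\mathbb{N} = \bigcup_{k=1}^{\infty}\{1,\dots,k\}$ with each set of finite measure), and that setting $\varphi(n,t) := \varphi_n(t)$ defines a generalized $\Phi$-function on $(\mathbb{N},\mu)$ which inherits from the hypothesis of the corollary every assumption of Theorem \ref{Th.M-O}: it is proper, constrained, a generalized $N$-function, and satisfies the $\Delta_2$-condition. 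By the identification described in the preliminaries, the corresponding Musielak-Orlicz space $L^{\varphi}(\mathbb{N},\mu)$ is precisely $\ell_{\varphi_n}$.

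Next I would observe that generalized $\Phi$-functions on $(\mathbb{N},\mu)$ are in bijective correspondence with sequences of Young functions via $\psi(n,t) = \psi_n(t)$, and that the measurability condition in the $n$-variable is automatic. The notion of $\psi$ increasing uniformly more rapidly than $\varphi$ translates faithfully: since $\mu$ has no non-empty null sets, the condition ``for a.e.\ $x \in \Omega$'' becomes ``for all $n \in \mathbb{N}$'', yielding exactly that for every $d > 0$ there exists $s > 0$ such that $\psi_n(s\overline{t})/s \geq d\, \varphi_n(\overline{t})$ for all $\overline{t} \geq 0$ and all $n \in \mathbb{N}$, which is the stated sequence-space version.

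Finally, I would invoke Theorem \ref{Th.M-O} directly: $S \subset \ell_{\varphi_n}$ is relatively weakly compact if and only if there exists a generalized $\Phi$-function $\psi$ on $(\mathbb{N},\mu)$ (equivalently, a sequence $(\psi_n)$ of Young functions) increasing uniformly more rapidly than $(\varphi_n)$ and such that $S$ is norm bounded in $L^{\psi}(\mathbb{N},\mu) = \ell_{\psi_n}$. No genuine obstacle arises; the entire content of the corollary is a translation of Theorem \ref{Th.M-O} to the discrete setting, so the proof consists essentially of matching the general hypotheses and the ``uniformly more rapidly'' definition to their counting-measure specializations.
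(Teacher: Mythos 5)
Your proposal is correct and matches the paper's approach exactly: the corollary is stated there as an immediate specialization of Theorem \ref{Th.M-O} to $\Omega=\mathbb{N}$ with the counting measure, with no further argument given. Your extra care in checking that the hypotheses and the ``increases uniformly more rapidly'' definition translate to the discrete setting (in particular that ``a.e.'' becomes ``for all $n$'') is exactly the right bookkeeping.
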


\section{Weakly convergent sequences}\label{sec.wconvergencia}

We now give three criteria for the weak convergence of sequences in $\lphi$ inherited from the previous Theorems \ref{clave}, \ref{crit2} and \ref{Th.M-O}.

\begin{proposicion}\label{sucesiones1}
Let $\lphi$ be a Musielak-Orlicz space for a $\sigma$-finite measure space $(\Omega,\mu)$ and a proper constrained generalized $N$-function $\varphi$ satisfying the $\Delta_2$-condition and let the sequence $(f_{n})$ and $f$ in $\lphi$. Then, $f_{n}\rightarrow f$ weakly if and only if all the three
\begin{itemize}
\item [($i$)] $\lim_{n} \int_{A} (f_{n}-f) d\mu = 0$ for each $A\in \Sigma$ with $\mu(A)<\infty$,
    \item [($ii$)] $\lim_{\mu(A)\rightarrow 0} \sup_{n} \int_A \vert (f_{n}-f) g\vert d\mu = 0$ for each function $g\in L^{\varphi^{*}(\cdot)}(\Omega)$, and
    \item [($iii$)] For each function $g\in L^{\varphi^{*}(\cdot)}(\Omega)$ and for every $\varepsilon>0$, there exists $A\in \Sigma$ with $\mu(A)<\infty$ such that $\, \sup_{n} \int_{\Omega\setminus A} \vert (f_{n}-f) g\vert d\mu \leq\varepsilon$.
\end{itemize}
\end{proposicion}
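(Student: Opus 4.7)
The plan is to establish the equivalence by using Theorem \ref{clave} as the main tool, combined with the Eberlein--Smulian theorem and a standard subsequence argument for the reverse direction.

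For $(\Rightarrow)$, assume $f_n\rightarrow f$ weakly. Condition (i) is immediate: since $\varphi$ is proper, $\chi_A\in L^{\varphi^*}(\Omega)$ for any $A$ with $\mu(A)<\infty$, so weak convergence of $f_n$ tested against $g=\chi_A$ gives (i). Conditions (ii) and (iii) follow from Theorem \ref{clave} applied to the set $S=\{f_n-f:n\in\mathbb{N}\}$, which is relatively weakly compact (as a weakly null sequence, together with the limit $0$, is weakly compact): properties (\ref{basico}) and (\ref{fuerafinito}) of Theorem \ref{clave} are exactly (ii) and (iii).

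For $(\Leftarrow)$, assume (i), (ii) and (iii). First I would verify that $\{f_n-f\}_n$ is norm bounded in $\lphi$. Fix $g\in L^{\varphi^*}(\Omega)$; by (iii) with $\varepsilon=1$, the tails $\int_{\Omega\setminus A}\vert(f_n-f)g\vert\,d\mu$ are uniformly bounded by $1$ for some $A$ of finite measure. On $A$, the family $\{(f_n-f)g\}$ is uniformly integrable by (ii), and the at most countably many atoms of $A$ have $\mu(\{a\})>0$, so testing (i) against $\chi_{\{a\}}$ yields $f_n(a)\to f(a)$ and hence a pointwise bound. Together this gives $\sup_n\int_\Omega\vert(f_n-f)g\vert\,d\mu<\infty$ for each $g\in L^{\varphi^*}(\Omega)$. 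Since $L^{\varphi^*}(\Omega)$ is the associate space of $\lphi$ and the two norms are in duality, the uniform boundedness principle yields $\sup_n\lVert f_n-f\rVert_\varphi<\infty$.

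With boundedness secured, Theorem \ref{clave} applies directly to $S=\{f_n-f\}_n$ using (ii) and (iii), showing this set is relatively weakly compact. By Eberlein--Smulian, every subsequence admits a further subsequence $(f_{n_k}-f)$ converging weakly to some $h\in\lphi$. For each $A$ with $\mu(A)<\infty$, testing the weak convergence against $\chi_A\in L^{\varphi^*}(\Omega)$ gives
$$
\int_A h\,d\mu=\lim_{k\rightarrow\infty}\int_A(f_{n_k}-f)\,d\mu=0
$$
by condition (i). Since $\Omega$ is $\sigma$-finite, this forces $h=0$ $\mu$-a.e., so every weak subsequential limit of $(f_n-f)$ equals $0$. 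Combined with relative weak compactness, this implies $f_n-f\rightarrow 0$ weakly, i.e. $f_n\rightarrow f$ weakly.

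The main obstacle I expect is the boundedness step in $(\Leftarrow)$: condition (ii) alone provides uniform integrability but not $L^1$-boundedness in the presence of atoms, so condition (i) must be invoked to control the atomic part before the uniform boundedness principle can close the argument. Everything else is then a routine application of Theorem \ref{clave} plus the Eberlein--Smulian subsequence trick.
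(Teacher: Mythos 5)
Your forward direction coincides with the paper's. For the converse, however, you take a genuinely different route. The paper never invokes weak compactness in that direction: it fixes $f=0$ and proves $\int_\Omega f_n g\,d\mu\to 0$ directly by an approximation ladder --- first for simple $g$ with finite-measure support (from (i)), then for bounded $g$ (using (iii) to pass to a finite-measure set $A$ and a uniform approximation by simple functions there), and finally for general $g\in\lphiconj$ by truncating on $G_m=\{|g|\le m\}$ and using (ii) to make the contribution of $\Omega\setminus G_m$ small. You instead first extract norm boundedness via the uniform boundedness principle, feed (ii) and (iii) into Theorem \ref{clave} to get relative weak compactness of $\{f_n-f\}$, and then identify the unique weak subsequential limit as $0$ through (i) and Eberlein--Smulian. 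Both arguments are sound; yours is more structural and makes explicit the boundedness issue on atomic measure spaces (which the paper's Remark after Theorem \ref{clave} flags but whose proof of this proposition leaves implicit, e.g.\ in the term $\frac{\varepsilon}{3}\int_A|f_n|\,d\mu$), while the paper's is more elementary, avoiding UBP and Eberlein--Smulian altogether. One small point to tighten in your boundedness step: if $A$ contains infinitely many atoms, knowing $\sup_n|f_n(a)-f(a)|<\infty$ at each atom separately does not bound the sum over all of them; you should use (ii) with the associated $\delta$ to group the non-atomic part and the (all but finitely many) atoms of measure less than $\delta$ into finitely many sets of measure less than $\delta$, reserving the argument via (i) for the finitely many atoms of measure at least $\delta$. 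With that adjustment the argument closes.
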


\begin{proof}  $(\Rightarrow):$ Clearly, $(i)$ holds since $\chi_{A} \in L^{\varphi^{*}(\cdot)}$ as $\varphi$ is proper, and conditions $(ii)$ and $(iii)$ follow from above Theorem \ref{clave}, as the sequence $(f_n-f)$ is a weakly compact set.

$(\Leftarrow):$
    We can assume w.l.o.g.  $f=0$. We must show that, for every $g\in (L^{\varphi}(\Omega))^*= L^{\varphi^*}(\Omega)$, $\lim_n \int f_ng d\mu=0$. If  $g\in L^{\varphi^{*}(\cdot)}(\Omega)$ is a simple function with finite measure support, then it follows directly from $(i)$ that $\lim_{n} \int_{\Omega} f_{n}g  d\mu = 0$.
    
    Assume now that $g\in L^{\varphi^*(\cdot)}(\Omega)$ is a bounded function.
    Given $\varepsilon > 0$, by condition $(iii)$ there exists $A\subset \Omega$ with $\mu(A)<\infty$ such that
    $$
    \sup_{n} \int_{\Omega\setminus A} \vert f_{n} g\vert d\mu \leq\frac{\varepsilon}{3}
    $$
    and we can take a simple function $g_{s}$ such that $\lVert(g-g_{s})\chi_{A}\rVert_{\infty} < \frac{\varepsilon}{3}$. Thus,
    \begin{align*}
    \int_{\Omega} |f_{n} g| d\mu
    \leq{}  &   \int_{A} |f_{n} (g-g_{s})| d\mu + \int_{A} |f_{n}g_{s}| d\mu + \int_{\Omega\setminus A} \vert f_n g\vert d\mu \\
    \leq{}  &   \frac{\varepsilon}{3} \int_{A} |f_{n}| d\mu + \int_{A} |f_{n}g_{s}| d\mu + \frac{\varepsilon}{3}
    \end{align*}
    and hence $\int_{\Omega} |f_{n} g| d\mu  \leq \varepsilon$ for $n$ large enough.

    Now, for an arbitrary $g\in L^{\varphi^{*}(\cdot)}(\Omega)$, by condition $(ii)$, there exists $\delta > 0$ such that $\int_{A} |f_{n} g| d\mu < \frac{\varepsilon}{2}$ if $\mu(A) < \delta$ for every natural $n$. Consider $G_{m}= \{t \in \Omega: |g(t)|\leq m\}$ with $m$ large enough so that $\mu(\Omega\setminus G_{m}) < \delta $. Then,
    $$
    \int_{\Omega} |f_{n} g| d\mu
    =\int_{G_{m}} |f_{n} g| d\mu + \int_{\Omega\setminus G_{m}} |f_{n} g | d\mu
    \leq  \int_{\Omega} |f_{n} g \chi_{G_{m}}| d\mu + \frac{\varepsilon}{2}.
    $$
    Now, since $g\chi_{G_m}$ is bounded, we deduce $\int_{\Omega} |f_{n} g| d\mu \leq \varepsilon$ for $n$ large enough.
\end{proof}

\begin{proposicion}\label{sucesiones2}
Let $\lphi$ be a Musielak-Orlicz space for a $\sigma$-finite measure space $(\Omega,\mu)$ and a proper constrained generalized $N$-function $\varphi$ satisfying the $\Delta_2$-condition and let a sequence $(f_{n})$ and $f$ in $\lphi$. Then, $f_n\rightarrow f$ weakly if and only if both
\begin{itemize}
    \item [$(i)$] $\lim_{n} \int_{A} f_{n} d\mu  = \int_{A} f  d\mu$ for each $A\in \Sigma$ with $\mu(A)<\infty$, and
    \item [$(ii)$] $\lim_{\lambda\rightarrow 0}\sup_{n} \frac{1}{\lambda}\int_{\Omega} \varphi(x, \lambda ( f_{n}-f) ) d\mu = 0$.
\end{itemize}
\end{proposicion}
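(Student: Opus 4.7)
The plan is to reduce the statement to Theorem \ref{crit2} via a subsequence argument, since condition (ii) is literally the modular characterization of relative weak compactness applied to $\{f_n - f\}$. For the forward direction $(\Rightarrow)$, condition (i) is immediate because each $\chi_A$ with $\mu(A)<\infty$ is a simple function in the associate space $L^{\varphi^*}(\Omega) = (\lphi)^*$ (as $\varphi$ is proper and satisfies $\Delta_2$), so weak convergence gives $\int_A f_n\, d\mu \to \int_A f\, d\mu$. For (ii), the weakly null sequence $\{f_n - f\}$ is relatively weakly compact in $\lphi$, and Theorem \ref{crit2} applied to this set delivers exactly the modular condition.

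For the converse I would translate to $f = 0$ by replacing $f_n$ with $f_n - f$ throughout, which preserves both hypotheses. Hypothesis (ii) is then the hypothesis of Theorem \ref{crit2}, so $\{f_n\}$ is relatively weakly compact in $\lphi$. It remains to identify the weak limit uniquely as $0$. Given any subsequence $(f_{n_k})$, relative weak compactness lets me extract a subsubsequence $(f_{n_{k_j}})$ weakly convergent to some $h\in\lphi$. Testing against $g = \chi_A$ for an arbitrary $A\in\Sigma$ with $\mu(A)<\infty$ gives $\int_A h\, d\mu = \lim_j \int_A f_{n_{k_j}}\, d\mu = 0$ by hypothesis (i). Since $\Omega$ is $\sigma$-finite, the vanishing of these integrals on every finite-measure set forces $h = 0$ a.e.

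Hence every subsequence of $(f_n)$ has a subsubsequence weakly convergent to $0$, which is the standard subsequence criterion for $f_n\to 0$ weakly. I expect no real obstacles in the execution: the only point requiring care is using relative weak compactness twice, once to guarantee existence of weak cluster points of arbitrary subsequences and once, implicitly via the subsequence principle, to upgrade uniqueness of the cluster point to full weak convergence. All the remaining manipulations (duality $(\lphi)^* = L^{\varphi^*}(\Omega)$, simple functions lying in the associate space, $\sigma$-finite exhaustion) are routine under the standing $\Delta_2$ and properness assumptions.
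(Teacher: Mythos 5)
Your proposal is correct, and the forward direction coincides with the paper's. The converse, however, takes a genuinely different route. The paper does not invoke relative weak compactness at all in that direction: it takes an arbitrary $g\in\lphiconj$, uses Young's inequality together with the modular condition $(ii)$ to verify directly the equi-integrability conditions $(ii)$ and $(iii)$ of Proposition \ref{sucesiones1}, and then concludes by that proposition (whose own proof is an approximation argument through simple and bounded functions). You instead feed $(ii)$ into Theorem \ref{crit2} to get relative weak compactness of $\{f_n-f\}$, extract weak cluster points of arbitrary subsequences via Eberlein--\v{S}mulian, identify every cluster point as $0$ by testing against $\chi_A$ (legitimate, since $\chi_A\in\lphiconj=(\lphi)^*$ for $\mu(A)<\infty$ and $\sigma$-finiteness upgrades $\int_A h\,d\mu=0$ to $h=0$ a.e.), and finish with the subsequence principle. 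Your argument is shorter and cleanly separates ``compactness'' from ``identification of the limit,'' at the price of relying on Eberlein--\v{S}mulian; the paper's argument is more computational but stays entirely within the elementary duality estimates already developed (Theorem \ref{clave} and Proposition \ref{sucesiones1}) and, as a by-product, exhibits the explicit equi-integrability of $\{(f_n-f)g\}$ in $L_1(\Omega)$ for each $g$. Both are valid proofs of the statement.
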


\begin{proof}
Clearly, $(i)$ is equivalent to condition $(i)$ in Proposition \ref{sucesiones1} and, applying Therorem \ref{crit2} to the weakly convergent sequence $(f_n-f)$, we get condition $(ii)$.

Conversely, let $g\in L^{\varphi^*(\cdot)}(\Omega)$ and  $r>0$  such that $\int_{\Omega} \varphi^*(x, rg(x))d\mu<\infty$. By hypothesis $(ii)$, given $\varepsilon>0$, there exists $\lambda_0>0$ such that
$$
\sup_{n}\frac{1}{\lambda_0}\int_{\Omega} \varphi(x, \lambda_0 (f_n(x)-f(x))) d\mu < \frac{\varepsilon r}{2}.
$$
Take $A\subset \Omega$ with $\mu(A)<\infty$ such that
$$
\int_{\Omega\setminus A} \varphi^*(x, rg(x))d\mu <\frac{\varepsilon}{2}
$$
and let $\delta>0$  such that, for every measurable set $E$ with $\mu(E)<\delta$,
$$\int_E \varphi^*(x, rg(x))d\mu <\frac{\varepsilon \lambda_0 r}{2}.$$
Thus, using  Young's inequality (\ref{Young-ineq}),  we have:
\begin{align*}
\sup_{n}\int_{\Omega\setminus A} {}  &   \vert (f_n(x)-f(x))g(x)\vert d\mu \\
 \leq{}  &   \frac{1}{\lambda_0 r}\left[\sup_{n}\int_{\Omega\setminus A} \varphi(x, \lambda_0 (f_n(x)-f(x))) d\mu + \int_{\Omega\setminus A} \varphi^*(x, rg(x)) d\mu \right]\\
<{}  &   \frac{1}{r}(\frac{\varepsilon r}{2})+\frac{\varepsilon}{2}
=\varepsilon,
\end{align*}
as well as
\begin{align*}
\sup_{n}\int_E {}  &   \vert (f_n(t)-f(t))g(t)\vert d\mu \\
\leq{}  &   \frac{1}{\lambda_0 r}\left[\sup_{n}\int_E \vert \lambda_0 (f_n(t)-f(t))\vert^{p(t)} d\mu + \int_E \vert rg(t)\vert^{p^{*}(t)} d\mu \right] \\
<{}  &   \frac{1}{r}(\frac{\varepsilon r}{2})+\frac{1}{\lambda_0 r} (\frac{\varepsilon \lambda_0 r}{2})
=\varepsilon.
\end{align*}
Thus, conditions $(iii)$ and $(ii)$ of the above Proposition \ref{sucesiones1} are satisfied and we conclude that $(f_n)$ is weakly convergent to $f$.
\end{proof}

\begin{proposicion}\label{sucesiones3}
Let $\lphi$ be a Musielak-Orlicz space for a $\sigma$-finite measure space $(\Omega,\mu)$ and a proper constrained generalized $N$-function $\varphi$ satisfying the $\Delta_2$-condition and let a sequence $(f_{n})$ and $f$ in $\lphi$. Then, $f_n\rightarrow f$ weakly if and only if both
\begin{itemize}
    \item [$(i)$] $\lim_{n} \int_{A} f_{n} d\mu  = \int_{A} f  d\mu$ for each $A\in \Sigma$ with $\mu(A)<\infty$, and
    \item [$(ii)$] $S=\{f_n:n\in\mathbb{N}\}\cup \{f\}$ is bounded in $\lpsi$ for some generalized $\Phi$-function $\psi(x,t)$ increasing uniformly more rapidly than $\varphi(x,t)$ for all $t$.
\end{itemize}
\end{proposicion}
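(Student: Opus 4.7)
The plan is to recognize this statement as a direct companion to Propositions \ref{sucesiones1} and \ref{sucesiones2}, with the boundedness hypothesis (ii) playing the role that modular decay played there. The bridge will be Theorem \ref{Th.M-O}, the dictionary between norm boundedness in an auxiliary Musielak-Orlicz space $\lpsi$ and relative weak compactness in $\lphi$.

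For the forward direction I would proceed as follows. Condition (i) is immediate because $\chi_A \in L^{\varphi^*}(\Omega)$ whenever $\mu(A) < \infty$ (the function $\varphi$ being proper), and weak convergence then forces $\int_A f_n \, d\mu \to \int_A f \, d\mu$. For condition (ii), the set $S = \{f_n : n \in \mathbb{N}\} \cup \{f\}$, being a weakly convergent sequence together with its weak limit, is relatively weakly compact in $\lphi$; applying the $(\Rightarrow)$ direction of Theorem \ref{Th.M-O} to $S$ then produces the required generalized $\Phi$-function $\psi(x,t)$ increasing uniformly more rapidly than $\varphi(x,t)$ with $S \subset B_{\lpsi}$.

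For the converse I would apply the $(\Leftarrow)$ direction of Theorem \ref{Th.M-O} to $S$ to conclude that $S$ is relatively weakly compact, hence so is the shifted family $\{f_n - f : n \in \mathbb{N}\}$ (translation by $-f$ being weakly continuous). Theorem \ref{crit2} applied to $\{f_n - f\}$ then delivers
$$
\lim_{\lambda \to 0} \sup_n \frac{1}{\lambda}\int_\Omega \varphi(x, \lambda(f_n - f)) \, d\mu = 0,
$$
which is exactly condition (ii) of Proposition \ref{sucesiones2}. Since condition (i) here is verbatim condition (i) there, Proposition \ref{sucesiones2} concludes that $f_n \to f$ weakly in $\lphi$.

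The main point needing any genuine verification, rather than pure bookkeeping, will be the translation-invariance step that $\{f_n - f\}$ inherits relative weak compactness from $S$. This is routine --- any weakly convergent subnet of $(f_{n_k})$ produces one of $(f_{n_k} - f)$ --- but it is essentially the only non-referential manoeuvre in the argument, since otherwise the proof just strings Theorem \ref{Th.M-O}, Theorem \ref{crit2} and Proposition \ref{sucesiones2} together in the correct order.
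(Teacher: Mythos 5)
Your proposal is correct and follows essentially the same route as the paper: condition $(i)$ via characteristic functions, Theorem \ref{Th.M-O} to pass between $\lpsi$-boundedness and relative weak compactness, and Proposition \ref{sucesiones2} (via Theorem \ref{crit2}) to close the converse. The only cosmetic difference is that the paper transfers to the shifted family $(f_n-f)$ at the level of $\lpsi$-boundedness (triangle inequality) before invoking Theorem \ref{Th.M-O}, whereas you invoke Theorem \ref{Th.M-O} on $S$ first and then translate the relatively weakly compact set by $-f$; both steps are equally valid.
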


\begin{proof}
    Again, $(i)$ is the same as in Propositions \ref{sucesiones1} and \ref{sucesiones2}, and applying Theorem \ref{Th.M-O} to the weakly convergent amplied sequence $(f_n)$ with the addition of $f_0=f$ we get $(ii)$. Conversely, if $S$ is norm bounded in $\lpsi$, so is the sequence $(f_n-f)$. Hence, by Theorem \ref{Th.M-O} $(f_n-f)$ is a weakly compact subset and by Theorem \ref{clave} we have $\lim_{\lambda\rightarrow 0}\sup_{n} \frac{1}{\lambda}\int_{\Omega} \varphi(x, \lambda ( f_{n}-f) ) d\mu = 0$, i.e. $(ii)$ in Theorem \ref{sucesiones2}. concluding that $f_n\rightarrow f$ weakly.
\end{proof}

In particular, it follows that in reflexive $\lphi$ spaces (with the necessary conditions on $\varphi$), a sequence $(f_{n})$ is weakly convergent to  $f\in \lphi $ if and only if $(f_{n})$ is norm bounded and $\int_{A} f_{n} d\mu \rightarrow \int_{A} f d\mu$, for every measurable $A \subset \Omega$ with $\mu(A)<\infty$.

Let us end with the enunciations of the corresponding results in Musielak-Orlicz sequence spaces.

\begin{corolario}\label{corolario-Nakano2}
Let $\ell_{\varphi_n}$ be a Musielak-Orlicz sequence space for a generalized $N$-function $(\varphi_n)$ with the $\Delta_2$ property. A sequence $(x^k)_k$  in $\ell_{\varphi_n}$ converges weakly to $y=(y_n)\in \ell_{\varphi_n}$ if and only if both
\begin{itemize}
    \item [$(i)$] For each coordinate $n$, $\lim_{k\rightarrow \infty} x_n^k=y_n$, and
    \item [$(ii)$] any one of the following the conditions is satisfied, either   
    \begin{itemize}
        \item [$(a)$] For every $(y_n)\in\ell_{\varphi_n^*}$ and $\varepsilon>0$, there exists a large enough natural $n_0$ such that $\sup_{k\in\mathbb{N}}\sum_{n=n_0}^{\infty} \vert(x^k_n-x)\cdot y_n\vert\leq \varepsilon$,
        \item [$(b)$] $\lim_{\lambda\rightarrow 0}\sup_{k\in\nat} \frac{1}{\lambda}\sum_{n=1}^{\infty} \varphi_n (\lambda (x_n^k-y_n))=0$, or
        \item [$(c)$] $(x^k)$ is norm bounded in $\ell_{\psi_n}$ for a sequence of Orlicz functions $(\psi_n)$ increasing uniformly more rapidly than $(\varphi_n)$ for all $t$.
    \end{itemize}
\end{itemize}
\end{corolario}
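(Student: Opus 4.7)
The plan is to obtain this corollary by a direct specialization of Propositions \ref{sucesiones1}, \ref{sucesiones2}, and \ref{sucesiones3} to the measure space $(\Omega,\mu)=(\mathbb{N},\text{counting})$. Under that identification, $L^\varphi(\Omega)=\ell_{\varphi_n}$, the associate space is $\ell_{\varphi_n^*}$, integrals against $\mu$ become ordinary series, and the subsets of $\mathbb{N}$ with finite measure are exactly the finite subsets. Each of the three criteria (a), (b), (c) for weak convergence then recasts into the corresponding one of the three propositions.

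Condition (i) here comes from the common condition (i) of Propositions \ref{sucesiones1}, \ref{sucesiones2}, \ref{sucesiones3}, which reads $\lim_k \int_A (f_k-f)\,d\mu=0$ for every set $A$ of finite measure. In the counting-measure setting this becomes $\lim_k \sum_{n\in A}(x^k_n - y_n)=0$ for every finite $A\subset\mathbb{N}$; taking singletons gives coordinatewise convergence, and conversely finite sums respect limits, so this is exactly (i). Granted (i), the equivalence of weak convergence with (i)$+$(b) is then a direct application of Proposition \ref{sucesiones2}, translating the modular $\int_\Omega \varphi(x,\lambda(f_k-f))\,d\mu$ as $\sum_n \varphi_n(\lambda(x^k_n-y_n))$, and the equivalence with (i)$+$(c) is a direct application of Proposition \ref{sucesiones3} with $\psi$ encoded by the sequence $(\psi_n)$.

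The equivalence of weak convergence with (i)$+$(a) requires slightly more care because (a) encapsulates both conditions (ii) and (iii) of Proposition \ref{sucesiones1}. The key observation is that, for the counting measure, $\mu(E)<1$ forces $E=\emptyset$, so condition (ii) of Proposition \ref{sucesiones1}, namely $\lim_{\mu(E)\to 0}\sup_k \int_E |(f_k-f)g|\,d\mu=0$, is automatically satisfied and contributes nothing in the sequence setting. Condition (iii) of Proposition \ref{sucesiones1} then provides, for each $g=(g_n)\in\ell_{\varphi_n^*}$ and each $\varepsilon>0$, a finite subset $A\subset\mathbb{N}$ with $\sup_k\sum_{n\notin A}|(x^k_n-y_n)g_n|\leq \varepsilon$; enlarging $A$ to the initial segment $\{1,\dots,n_0-1\}$ with $n_0=\max A+1$ yields exactly condition (a) (after the evident typographic correction of $x^k_n-x$ to $x^k_n-y_n$). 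The converse direction is the same translation applied in reverse.

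The main (and essentially only) non-routine step is recognizing that the atomic, integer-valued nature of the counting measure collapses the two tail-control conditions of Proposition \ref{sucesiones1} into the single condition (a); after that, each of the three equivalences is obtained by reading off the appropriate proposition with integrals rewritten as series.
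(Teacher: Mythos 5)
Your proof is correct and matches the paper's (implicit) argument: the corollary is stated there without proof precisely as the specialization of Propositions \ref{sucesiones1}--\ref{sucesiones3} to the counting measure, and your observation that $\mu(E)<1$ forces $E=\emptyset$ (so condition $(ii)$ of Proposition \ref{sucesiones1} is vacuous, leaving only the tail condition, which becomes $(a)$ after enlarging $A$ to an initial segment) is exactly the right way to carry out the translation. The only caveat is that your argument, like the propositions it invokes, tacitly requires $(\varphi_n)$ to be proper and constrained --- hypotheses the corollary's statement omits --- but that is a slip in the paper's enunciation rather than in your proof.
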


\section{Final comments}

Although the text presented here is complete, it will not be submitted to a journal in the very short term, since we think that the article could be improved in significant ways. We want to bypass Lemma \ref{mallema1} and be able to prove Theorem \ref{crit2} without it, allowing us to remove the hypothesis of $\varphi$ being constrained along the article. 

Also, it is probably already known whenever $\lphi$ has the subsequence splitting property (may be the $\Delta_2$-condition is enough), so the weakly Banach-Saks property would be proved with more clear hypothesis.

Finally, in \cite{Ando} and \cite{Nowak}, it is said that a Young function $\varphi$ increases uniformly more rapidly than itself if and only if it satisfies the $\nabla_2$-condition ($\Delta_2$ for $\varphi^*$). For generalized $\Phi$-functions it should hold the same, as $\Delta_2$ plus $\nabla_2$ implies that $\lphi$ is reflexive and hence weak compactness is equivalent to boundedness. But we have not been able to prove it yet (neither for Young functions, as And\^o and Nowak do not prove and we fear we are missing an easy argument), and neither have found clear references (or proved) stating that $\nabla_2$ (considered as $\varphi(x,2t)\geq K\varphi(x,t)$) is equivalent to $\varphi^*$ having the $\Delta_2$-condition.

\end{document}